 \newtheorem{theorem}{Theorem}
\newtheorem{lemma}{Lemma}
\newtheorem{problem}{Problem}
\newtheorem{remark}{Remark}
\newtheorem{assumption}{Assumption}
\crefname{lemma}{Lemma}{Lemmas}
\crefname{problem}{Problem}{Problems}
\crefname{definition}{Definition}{def}
\crefname{remark}{Remark}{rem}
\crefname{section}{section}{sections}
\crefname{assumption}{Assumption}{Assumptions}
\Crefname{section}{Section}{Sections}
\Crefname{assumption}{Assumption}{Assumptions}
\def\BibTeX{{\rm B\kern-.05em{\sc i\kern-.025em b}\kern-.08em
    T\kern-.1667em\lower.7ex\hbox{E}\kern-.125emX}}
\begin{document}

\title{Chance-Constrained Covariance Steering for Discrete-Time Markov Jump Linear Systems \\
\thanks{This work was supported by the U.S. Air Force Office of Scientific
Research through research grant FA9550-23-1-0512. The authors are with the School of Aeronautics \& Astronautics, Purdue University, West Lafayette, IN 47906 USA. (email: shriva15@purdue.edu, koguri@purdue.edu)}
}
\author{\IEEEauthorblockN{Shaurya Shrivastava}
\and
\IEEEauthorblockN{Kenshiro Oguri}
}
\maketitle
\begin{abstract}
In this paper, we solve the chance-constrained covariance steering problem for discrete-time Markov Jump Linear Systems (MJLS) using a convex optimization framework. We derive the analytical expressions for the mean and covariance trajectories of time-varying discrete-time MJLS and show that they cannot be separated even without chance constraints, unlike the single-mode dynamics case. To solve the covariance steering problem, we propose a two-step convex optimization framework, which optimizes the mean and covariance subproblems sequentially. Further, we incorporate chance constraints and propose an iterative optimization framework to solve the chance-constrained covariance steering problem for MJLS. Both problems are originally nonconvex, and we derive convex relaxations which are proved to be lossless at optimality using the Karush–Kuhn–Tucker (KKT) conditions. Numerical simulations demonstrate the proposed method by achieving target covariances while respecting chance constraints under additive noise, bias, and Markovian jump dynamics.
\end{abstract}
\section{Introduction}
Covariance steering is an important problem of designing a control policy that steers a system from an initial given distribution to a target distribution while minimizing a cost function. It was first proposed in \cite{hotz_covariance_1987} as an infinite horizon covariance control problem, and has attracted significant interest recently as a finite horizon covariance steering problem\cite{chen_optimal_2016-1,chen_optimal_2016,bakolas_finite-horizon_2018, goldshtein_finite-horizon_2017}.

The covariance steering formulation was previously applied to vehicle path planning \cite{okamoto_optimal_2019} and spacecraft trajectory optimization\cite{kumagai_robust_nodate, oguri_chance-constrained_2024, ridderhof_chance-constrained_2020}. 
Such applications typically require path constraints due to safety considerations and physical limitations and have previously been incorporated in the form of probabilistic chance constraints\cite{okamoto_optimal_2018}. \cite{liu_optimal_2024} proposed a lossless convex relaxation of the chance-constrained covariance steering problem, which was later shown \cite{rapakoulias_discrete-time_nodate} to outperform existing methods \cite{bakolas_finite-horizon_2018,okamoto_optimal_2019} in computation time.

Despite recent remarkable advances in covariance steering theory for smooth dynamics, covariance steering for hybrid systems has been significantly underexplored. \cite{yu_optimal_2024} studies covariance steering for a class of hybrid systems that involve state-triggered switching. However, such approaches do not extend to tackle covariance steering for Markov jump linear systems (MJLS). MJLS is a subclass of stochastic hybrid systems, where the mode dynamics are governed by a Markov chain. They have been extensively studied in the literature with applications in fault-tolerant control \cite{li_robust_2019}, network control \cite{su_event-triggered_2021}, and macroeconomic models \cite{do_val_receding_1999}. \cite{costa_discrete-time_2005} is an excellent reference on the classical results for discrete-time MJLS.
 
Constrained control of MJLS has been an active area of research, e.g. with state and control polytopic constraints for linear time-invariant\cite{costa_constrained_1999} and periodic systems\cite{shrivastava_robust_2024}, and with second-moment constraints\cite{vargas_second_2013}. Chance constraints have previously been incorporated for MJLS\cite{flus_control_2022}, but they do not optimize the feedback control and utilize probabilistic reachable sets, which might not scale well with higher dimensions or longer time horizons. Meanwhile, the covariance steering paradigm offers a direct way to optimize for a given cost function under chance constraints and boundary conditions in a computationally tractable manner. To the authors' best knowledge, such formulations have previously not been investigated in the context of MJLS.

In this work, we focus on the optimal covariance steering problem for MJLS with chance constraints. This is motivated by the need for probabilistic motion planning approaches which face abrupt changes in dynamics, such as spacecrafts\cite{shrivastava_markov-jump_2025} and robots\cite{vargas_second_2013} with sudden actuator failure. The contributions of this work are threefold: (1) Demonstrate the coupling of MJLS mean and covariance dynamics, showing why mean-covariance separation of single-mode systems does not extend to MJLS, (2) Derive a lossless convex relaxation to solve the MJLS covariance steering problem in a computationally efficient manner, and (3) Incorporate chance constraints in the covariance steering framework and propose a sequential convex programming approach to solve the problem.
\subsubsection*{Notation}$\mathbb{R}$ and {$\mathbb{Z}_{a:b}$ denote the set of real numbers and integers} from $a$ to $b$, respectively. $I$ denotes the identity matrix of appropriate size. $\mathds{1}_{\mathcal{\omega}}$ denotes the indicator function. $\lambda_{\max}(\cdot)$, $\mathrm{det}(\cdot)$, and $\mathrm{Tr(\cdot)}$ denote the maximum eigenvalue, determinant, and trace of a matrix respectively. $\|\cdot\|$ denotes the $l_2$ norm for a vector, while $\|\cdot\|_p$ denotes its $l_p$ norm.
\section{Problem Statement}
We consider a discrete-time MJLS defined on the probability space $(\Omega,\mathcal{F},\mathbb{P})$ with a wide-sense stationary white noise $w_k\in\mathbb{R}^{n_w}$ and a mode-dependent deterministic bias $c_k(r_k)$, given by:
\begin{equation}\label{mjls}
    x_{k+1} = A_k (r_k)x_k + B_k(r_k) u_k + c_k(r_k) + G_k(r_k)w_k
\end{equation}
where $x\in\mathbb{R}^{n_x}$ and $u_k(i)\in\mathbb{R}^{n_u}$ are the state and control at time $k$ respectively. $r_k$ denotes the active mode of a homogeneous Markov chain taking values in the set $\mathcal{S}\triangleq\{1\ldots N\}$ with transition probability matrix $\mathcal{P}$ whose ($i$,$j$)th entry is $p_{ij}$. For the white noise $w_k$, we have $\mathbb{E}[w_k]=0$, $\mathbb{E}[w_kw^\top_k]=I_{n_w}$, and $\mathbb{E}[w_jw^\top_k]=0 \;\forall i,j,k, j\neq k$ by definition. To study the MJLS dynamics, we focus on the mode probability $\rho_k(i)$, and the MJLS state mean $\mu_k$ and covariance $\Sigma_k$. In MJLS literature, it has typically been easier to work with partial expectations such as $\mathbb{E}[x_k\mathds{1}_{r_k=i}]$ instead of $\mathbb{E}[x_k]$ and we define such variables for the mean and covariance below:
{\begin{align}
    &\rho_k(i) \triangleq \mathbb{E} \left[\mathds{1}_{\{r_k=i\}}\right], 
        & q_k(i) \triangleq \mathbb{E} \left[x_k \mathds{1}_{r_k=i} \right],   \notag \\
        &\overline{x}_k(i)\triangleq\mathbb{E}[x_k|r_k=i], 
        & \Sigma_k\triangleq \mathbb{E} \left[(x_k-\mu_k) (x_k-\mu_k)^\top \right]\notag\\
        &\mu_k\triangleq \mathbb{E}[x_k],& \hspace{-12mm}S_k(i) \triangleq \mathbb{E}[(x-\overline{x}_k(i))(x-\overline{x}_k(i))^\top\mathds{1}_{r_k=i}]\notag
\end{align}}
{\noindent The mode probability distribution vector is given by $\rho_k=[\rho_k(1)\ldots\rho_k(N)]$. For a given initial mode distribution $\rho_0$, $\rho_k$ can be calculated as $\rho_k=\rho_0\mathcal{P}^k$. Our framework can be extended trivially to time-inhomogeneous MJLS with $\rho_k=\rho_0\mathcal{P}_0\mathcal{P}_1\ldots\mathcal{P}_{k-1}$, but we only consider the time-homogeneous case for notational simplicity. The MJLS covariance steering optimal control problem is then defined as:
\begin{problem}\label{prob0} Solve:
    \begin{subequations}
    \begin{align}
        \min_{x_k,u_k(i)} \;J& = \sum_{k=0}^{T-1} \mathbb{E}\left[x_kQ_kx_k^\top + u_k(i)R_ku_k^\top(i)\right]\label{prob1_costfunction}&\\
        \mathrm{s.t}&.\;\;\;\forall k \in \mathbb{Z}_{0:T-1},\notag\\
        x_{k+1} = A_k& (r_k)x_k + B_k(r_k) u_k + c_k(r_k) + G_k(r_k) w_k&\\
        \mathbb{E}[x_0]=\mu_0&,\;\mathrm{cov}(x_0)={\Sigma}_0,  \;\mathbb{E}[x_T]=\mu_f, \; \mathrm{cov}(x_T)\preceq\Sigma_f&\label{bc}\\
        \mathbb{P}(x_k\in \mathcal{X}) &\geq 1 - \epsilon_x, \quad \mathbb{P}(u_k(i) \in \mathcal{U})\geq 1 - \epsilon_u\label{cc}
    \end{align}
\end{subequations}
where $Q_k\succeq0$, $R_k\succ0$ and $A_k(i)$ is invertible $\forall k\in\mathbb{Z}_{0:T},i\in\mathcal{S}$. The distribution for $\rho_0$, $w_k$ and $x_0$ are independent. ${\mu}_0,\;{\Sigma}_0$ are the initial mean and covariance, while $\mu_f,\Sigma_f$ are the final time mean and covariance constraint with $\mathrm{cov}(x)$ denoting the covariance of $x$. $\mathcal{X}$ and $\mathcal{U}$ indicate the feasible sets which must be satisfied with probability $1-\epsilon_x$ and $1-\epsilon_u$ for state and control respectively.
\end{problem}}
\begin{assumption}\label{asm_rho}
    $\rho_k(i)>0 ,\;\forall k\in{\mathbb{Z}_{0:T}},i\in\mathcal{S}$, i.e. the probability of any mode being active is non-zero at all times.
\end{assumption}
{\Cref{prob0} is hard to solve as the decision variables are random variables. To solve it in a computationally tractable manner, we focus on the mean and covariance propagation of the MJLS. However, instead of dealing with $\mu_k$ and $\Sigma_k$, the MJLS dynamics motivate us to instead work with $q_k(i),\overline{x}_k(i)$ and $S_k(i)$, which are related to $\mu_k$ and $\Sigma_k$ as:
\begin{subequations}
    \begin{align}
    \mu_k &= \sum_{i\in\mathcal{S}}q_k(i) , \;\; \overline{x}_k(i) =\frac{q_k(i)}{\rho_k(i)}\label{mean_def}\\
    \Sigma_k &= \sum_{i\in\mathcal{S}} \left(S_k(i)  + \rho_k(i)(\overline{x}_k(i)-\mu_k)(\overline{x}_k(i)-\mu_k)^\top\right) \label{cov_def}
\end{align}
\end{subequations} 
We consider a mode-dependent affine feedback control policy:}
{\begin{equation}\label{controllaw}
    {u_k(i) = \overline{u}_k(i) + K_k(i) \left(x_k-\overline{x}_k(i)\right)}
\end{equation}}
{\begin{assumption}\label{asm_cov}
    Under \cref{controllaw}, $S_k(i)\succ 0, \;\forall k \in {\mathbb{Z}_{0:T},i\in\mathcal{S}}$. 
\end{assumption}
Such control laws with a linear feedback law is common in covariance steering literature \cite{liu_optimal_2024,rapakoulias_discrete-time_nodate} as it allows us to steer both the mean and covariance simultaneously.
We will now use the first and second order moments to solve \cref{prob0} in a tractable manner. Focusing on the mean, 
$q_k(i)$ propagation can be performed as given below:}
\begin{align}
    q_{k+1}&(j)= \mathbb{E} \left[x_{k+1} \mathds{1}_{\{r_{k+1}=j\}} \right]\notag \\
        &\hspace{-5mm}=\sum_{i\in\mathcal{S}}\mathbb{E} \left[\left({A}_k (i)x_k + B_k(i) (\overline{u}_k(i) + K_k(i) (x_k-{\overline{x}_k(i)}))\right.\right.\notag\\&\left.\left. \hspace{3cm} + c_k(i) + G_k(i) w_k \right)p_{ij} \mathds{1}_{\{r_k=i\}} \right]\notag\\
        &\hspace{-5mm}={\sum_{i\in\mathcal{S}}p_{ij}\left({A}_k(i)q_k(i) +\rho_k(i)(B_k(i)\overline{u}_k(i) + c_k(i))\right)}\label{meanprop1}
\end{align}
Similarly, covariance propagation can be studied using $S_k(i)$ as follows: 
\begin{align}\label{cov1}
    S_{k+1}(j)&=\mathbb{E}[(x_{k+1}-\overline{x}_{k+1}(j))(x_{k+1}-\overline{x}_{k+1}(j))^\top\mathds{1}_{r_{k+1}=j}]\notag\\
    &\hspace{-5mm}=\mathbb{E}[x_{k+1}x_{k+1}^\top\mathds{1}_{r_{k+1}=j}] - \rho_{k+1}(j)\overline{x}_{k+1}(j)\overline{x}_{k+1}^\top(j)
\end{align}
Substituting the identities given in the appendix, we get:
\begin{align}
    & S_{k+1}(j)=\Bigl(\sum_{i\in\mathcal{S}}\left[p_{ij}\{A_k(i)S_k(i)A^\top_k(i)\right.\notag\\
    &+A_k(i)S_k(i)K_k^\top(i) B_k^\top(i) +  B_k(i)K_k(i)S_k^\top(i)A_k^\top(i) \notag\\
    &+ B_k(i)(\rho_k(i)\overline{u}_k(i)\overline{u}_k^\top(i)+ K_k(i)S_k(i)K_k^\top(i))B_k^\top(i) \notag\\
    & + \rho_k(i)(B_k(i)\overline{u}_k(i)c_k^\top(i) + B_k(i)\overline{u}_k(i)\overline{x}_k^\top A_k^\top(i)  \notag\\
    & +c_k(i)c_k^\top(i)+A_k(i)\overline{x}_k(i)\overline{u}_k^\top         (i)B_k^\top(i) \notag\\
    & +c_k(i) \overline{u}_k^\top(i)B_k^\top(i)+c_k(i)\overline{x}_k^\top(i) A_k^\top(i)+ G_k(i)G_k^\top(i) \notag\\
    & +A_k(i)\overline{x}_k(i)c_k^\top(i)+A_k(i)\overline{x}_k(i)\overline{x}_k^\top(i)A_k^\top(i))\}\Bigr) \notag\\
    &\hspace{25mm}-\rho_{k+1}(j)\overline{x}_{k+1}(j)\overline{x}_{k+1}^\top(j)\label{covprop1}
\end{align}
\begin{remark}
    A careful analysis of \cref{covprop1} along with \cref{cov_def} show that the terms involving the MJLS mean information do not cancel out for the covariance propagation, demonstrating that the mean-covariance separation of single-mode dynamics (shown in \cite{okamoto_optimal_2018}) does not extend to MJLS.
\end{remark}
For a given finite sequence of tuples $\{q_k(1)\ldots q_k(N)\}_{k\in\mathbb{Z}_{0:T}}$ and $\{\overline{u}_k(1)\ldots\overline{u}_k(N)\}_{k\in\mathbb{Z}_{0:T-1}}$ which satisfy \cref{meanprop1}, we define a corresponding mean trajectory $\tau$
as a set of two finite sequences of $N+1$ and $N$-tuples $\{\mu_k,\overline{x}_k(1)\ldots\overline{x}_k(N)\}_{k\in\mathbb{Z}_{0:{T}}}$  $\{\overline{u}_k(1)\ldots\overline{u}_k(N)\}_{k\in\mathbb{Z}_{0:{T-1}}}$ respectively which satisfy \cref{mean_def} and \cref{meanprop1}  simultaneously. 

We now redefine the cost function $J$ in terms of the newly defined mean and covariance variables. From \cref{prob1_costfunction}, we know $J=\sum_{k=0}^{T-1}J_k$, where $J_k$ is given by
\begin{equation*}
    \begin{aligned}
        J_k&= \sum_{i\in\mathcal{S}}\mathbb{E}\left[(x_k^\top Q_kx_k + u^\top_k(i)R_ku_k(i))\mathds{1}_{r_k=i}\right]\\
        &=\sum_{i\in\mathcal{S}} \mathrm{Tr}\left(\mathbb{E}[x_kx_k^\top\mathds{1}_{r_k=i}]Q_k + \mathbb{E}[u_k(i)u^\top_k(i)\mathds{1}_{r_k=i}] R_k  \right)
    \end{aligned}
\end{equation*}
Using the identities derived in \cref{id0,id3} in the appendix, rewrite $J_k$ as:
\begin{align}\label{Jk}
    J_k&=\sum_{i\in\mathcal{S}} \left[\rho_k(i)\{\mathrm{Tr}(\overline{x}_k(i)\overline{x}_k^\top(i) Q_k) +\mathrm{Tr}(\overline{u}_k(i)\overline{u}^\top_k(i)R_k)\}  \right.\notag\\&\left.\hspace{0.5cm}+ \mathrm{Tr}({S}_k(i)Q_k) + \mathrm{Tr}(K_k(i)S_k(i)K^\top_k(i)R_k) \right]
\end{align}
We now define our optimal control problem in terms of the mean and covariance variables using \cref{controllaw}, which approximates \cref{prob0}. 
\begin{problem}\label{prob0_det}
    Solve:
    \begin{align}
            &\min_{\mu_k,q_k(i),\overline{x}_k(i),\overline{u}_k(i),K_k(i),S_k(i),\Sigma_k}\;J = \sum_{k=0}^{T-1} J_k\\
        \quad &\mathrm{s.t.}\;\forall k \in \mathbb{Z}_{0:T-1},\notag\\
        &\mathrm{\cref{mean_def}, \cref{meanprop1}, \cref{cov_def}, \cref{covprop1},\cref{bc},\cref{cc}}\notag
        \end{align}
\end{problem}
\cref{prob0_det} is still not a tractable problem to solve because of the nonlinear constraint \cref{covprop1} and chance constraints \cref{cc}, which are not yet defined as a deterministic constraint in the mean and covariance variables. We tackle these challenges in the following sections by first proposing a lossless convex reformulation of \cref{prob0_det} without chance constraints, and then showing that these relaxations remain lossless with the deterministic formulations of chance constraints. Our lossless relaxations convert an equality constraint to an inequality constraint, which is then shown to be tight, i.e. equivalent to the original equality constraint at optimality.
\section{Unconstrained MJLS Covariance Steering}
\subsection{Optimal Control Problem}
The unconstrained optimal control problem is given by:
\begin{problem}\label{ocp_uc}
    Solve \cref{prob0_det} without constraints in \cref{cc}.
\end{problem}
\subsection{Convex Relaxation}
The nonlinear terms in \cref{covprop1} result in a non-convex optimization landscape for \cref{ocp_uc}. We first relax the nonlinear terms involving the covariance variables $K_k(i)$ and $S_k(i)$ in \cref{covprop1} by introducing the following change of variables:
\begin{equation}\label{substitution}
    \begin{aligned}
        L_k(i)&=K_k(i){{S}_k(i)},\; & Y_k(i)&=L_k(i){{S}^{-1}_k(i)}L_k^\top(i)
    \end{aligned}
\end{equation}
Rearranging \cref{covprop1} using the above-mentioned terms,
{\begin{align}\label{covprop_LY}
    & S_{k+1}(j)=\Bigl(\sum_{i\in\mathcal{S}}\left[p_{ij}\{A_k(i)S_k(i)A^\top_k(i)\right.\notag\\
    &+A_k(i)L_k^\top(i) B_k^\top(i) +  B_k(i)L_k(i)A_k^\top(i) + B_k(i)Y_k(i)B_k^\top(i)\notag\\
    &+\rho_k(i)B_k(i)\overline{u}_k(i)(A_k(i)\overline{x}_k(i)+B_k(i)\overline{u}_k(i)+c_k(i))^\top\notag\\
    & +\rho_k(i)c_k(i)(A_k(i)\overline{x}_k(i)+B_k(i)\overline{u}_k(i)+c_k(i))^\top \notag\\
    &+\rho_k(i)A_k(i)\overline{x}_k(i)(A_k(i)\overline{x}_k(i)+B_k(i)\overline{u}_k(i)+c_k(i))^\top \notag\\
    & + G_k(i)G_k^\top(i))\}\Bigr)-\rho_{k+1}(j)\overline{x}_{k+1}(j)\overline{x}_{k+1}^\top(j)
\end{align}}
For a given mean trajectory $\tau$, we define covariance trajectory $\Xi$ as a set of two finite sequences of $N+1$ and $2N$-tuples $\{\Sigma_k\}_{k\in\mathbb{Z}_{0:T}}$ and $\{L_k(1)\ldots L_k(N),Y_k(1)\ldots Y_k(N)\}_{k\in\mathbb{Z}_{0:T-1}}$ which satisfy \cref{covprop_LY}. Thus, \cref{covprop_LY} can be expressed as
\begin{equation}\label{covprop_sub}
    H_k^\Sigma = S_{k+1}(j) - F_\tau(\Xi)=0
\end{equation}
where $F_\tau(\Xi)$ is a linear operator in $\Xi$ for a given $\tau$. We relax the non-convex definition of $Y_k(i)$ in \cref{substitution} as:
\begin{equation}\label{constraint_Ck}
    C_k(i)=L_k(i){{S}^{-1}_k(i)}L_k^\top(i)-Y_k(i)\preceq 0
\end{equation}
 which can be recast as a LMI using Schur's complement:
\begin{equation}\label{rel1}
        \begin{bsmallmatrix}
            Y_k(i) & L_k(i)\\
            L_k^\top(i) & {{S}_k(i)}
        \end{bsmallmatrix} \succeq 0
\end{equation}
We note that \cref{covprop_LY} still has nonlinear terms involving the mean variables. To solve \cref{ocp_uc} in a computationally tractable manner, we propose a two-step optimization scheme which solves the mean and covariance problem sequentially. The objective function can be divided into a mean and covariance part using the new variables as follows:
\begin{equation}\label{Jk_sub}
    \begin{aligned}
        J&=J(\tau)+J(\Xi)
    \end{aligned}
\end{equation}
where 
{\begin{subequations}
    \begin{align}
        J(\tau)=\sum_{k=0}^{T-1}&\sum_{i\in\mathcal{S}}\rho_k(i)\{\mathrm{Tr}(\overline{x}_k(i)\overline{x}_k^\top(i) Q_k +\overline{u}_k(i)\overline{u}^\top_k(i)R_k)\}\label{Jtau}\\
        J(\Xi) &=\sum_{k=0}^{T-1}\sum_{i\in\mathcal{S}} \left[\text{Tr}({S}_k(i)Q_k+Y_k(i)R_k)\right]\label{JXi}
    \end{align}
\end{subequations}}
\subsection{Two-step Convex Optimization Framework}
We now define the two subproblems which comprise our two-step optimization framework. It first solves \cref{prob1} and then uses its solution to solve \cref{prob2}.

\begin{problem}\label{prob1} {Solve:
\begin{subequations}\label{ocp_mean}
\begin{align}
    &\min_{q_k(i),\overline{x}_k(i),\overline{u}_k(i)} J(\tau)& \label{eq:mean_obj} \\
    &\mathrm{s.t.}\quad\quad\mathrm{\cref{meanprop1}, \cref{mean_def}}\;\;\;\forall k \in \mathbb{Z}_0^{T-1}, \;\;\forall i \in \mathcal{S} \label{meanprob_uc1}
    &\\
    &q_0(i)=\rho_0(i){\mu}_0 , \;\sum_{i\in\mathcal{S}}q_T(i)=\mu_F\label{meanprob_uc2}
\end{align}
\end{subequations}}
\end{problem}

\begin{problem} \label{prob2}
    Given a mean trajectory $\tau$, solve:
{\begin{subequations}\label{ocp_cov}
\begin{align}
    &\hspace{-3cm}\min_{S_k(i),L_k(i),Y_k(i)} J(\Xi)\label{eq:cov_obj} \\
    \mathrm{s.t.}\;\;\; \mathrm{\cref{covprop_sub}, \cref{rel1}}\;\; &\forall k \in \mathbb{Z}_0^{T-1}, \; i \in \mathcal{S}, \label{covprob_uc1}\\
    &\hspace{-3cm}S_0(i)=\rho_0(i){\Sigma}_0, \quad \Sigma_T \preceq \Sigma_f \label{covprob_uc2}
\end{align}
\end{subequations}
where $\Sigma_T$ is calculated with $S_T(i)$ using \cref{cov_def}}
\end{problem}

\begin{assumption}\label{asm_slater}
    Slater's condition is satisfied for \cref{prob2}, i.e. there exists a strictly feasible solution for \cref{prob2}.
 \end{assumption}

\begin{lemma}[Lemma 1 in \cite{liu_optimal_2024}]\label{lemma1}
    Let $A$ and $B$ be $n \times n$ symmetric matrices with $A \succeq 0, B \preceq 0$, and $\mathrm{Tr}(A B)=0$. If $B$ has at least one nonzero eigenvalue, then, $A$ is singular.
\end{lemma}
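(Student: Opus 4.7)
The plan is to reduce the statement to the matrix identity $AB=0$; once that is in hand, any eigenvector of $B$ with nonzero eigenvalue will live in $\ker A$, forcing $A$ to be singular.

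First I would exploit the positive semidefinite structure of both $A$ and $-B$ and factor them as $A = M^\top M$ and $-B = N^\top N$ (e.g.\ using the symmetric PSD square root, or a Cholesky-type factorization). A short manipulation using the cyclic property of the trace then yields
\[
\mathrm{Tr}(AB) \;=\; -\mathrm{Tr}(M^\top M N^\top N) \;=\; -\mathrm{Tr}\bigl((MN^\top)(MN^\top)^\top\bigr) \;=\; -\|MN^\top\|_F^2,
\]
so the hypothesis $\mathrm{Tr}(AB)=0$ collapses to $MN^\top = 0$. Multiplying on the left by $M^\top$ and on the right by $N$ gives $M^\top M N^\top N = 0$, which is exactly $A(-B)=0$, i.e.\ $AB=0$.

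The second step uses the assumed nonzero eigenvalue of $B$: pick $v\neq 0$ with $Bv = \lambda v$ and $\lambda \neq 0$ (in fact $\lambda < 0$ since $B\preceq 0$). Then $0 = ABv = \lambda Av$, and dividing by $\lambda$ yields $Av = 0$. Thus $v$ is a nonzero vector in $\ker A$, and $A$ is singular.

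I do not foresee a genuinely hard step here; the main thing to watch is simply keeping the signs straight in the factorization $-B = N^\top N$ so that the Frobenius-norm identity has the correct sign. An alternative route would rotate into the eigenbasis of $B$, writing $B = U\Lambda U^\top$ and $\tilde A = U^\top A U$, and conclude from $0 = \sum_i \lambda_i \tilde A_{ii}$ (a sum of non-positive terms) together with the fact that a zero diagonal entry of a PSD matrix forces the whole row and column to vanish. I prefer the trace-factorization route because it avoids any case analysis on the spectrum of $B$ and extracts the relation $AB=0$ directly, from which the singularity conclusion is immediate.
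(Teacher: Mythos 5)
Your proof is correct and complete: the factorizations $A=M^\top M$, $-B=N^\top N$ together with $\mathrm{Tr}(AB)=-\|MN^\top\|_F^2$ give $MN^\top=0$, hence $AB=0$, and an eigenvector of $B$ for a nonzero eigenvalue then lies in $\ker A$. The paper does not reproduce a proof of this lemma (it simply cites Lemma 1 of the referenced work), and your trace--factorization argument is essentially the standard one used there, so no further comparison is needed.
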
 

\begin{theorem}\label{thm1}
    Under \cref{asm_rho,asm_cov,asm_slater}, the solution to \cref{prob2} satisfies $Y_k(i)=L_k(i){{S}^{-1}_k(i)}L_k^\top(i)$, hence, lossless.
\end{theorem}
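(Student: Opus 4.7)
The plan is to use the KKT conditions of \cref{prob2}, which under \cref{asm_slater} are both necessary and sufficient for optimality by strong duality. I would attach Lagrange multipliers $M_{k+1}(j)$ to the equality \cref{covprop_sub}, $\Lambda_k(i)\succeq 0$ to $C_k(i)\preceq 0$ (equivalent to the Schur LMI \cref{rel1}), and $\Psi\succeq 0$ to the terminal constraint $\Sigma_T\preceq\Sigma_f$. Complementary slackness then gives $\mathrm{Tr}(\Lambda_k(i)C_k(i))=0$ at every $(k,i)$, and since $C_k(i)\preceq 0$, by the contrapositive of \cref{lemma1} it suffices to show that $\Lambda_k(i)$ is nonsingular; the strategy is to prove the stronger statement $\Lambda_k(i)\succ 0$.

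The stationarity equation with respect to $Y_k(i)$ immediately yields
\begin{equation*}
\Lambda_k(i)=R_k-\sum_{j\in\mathcal{S}}p_{ij}B_k^\top(i)M_{k+1}(j)B_k(i),
\end{equation*}
so I would focus on controlling the sign of $M_{k+1}(j)$. Setting $N_k(j):=-M_k(j)$, the stationarity relations with respect to $L_k(i)$ and $S_k(j)$—together with \cref{asm_cov}, which allows $S_k^{-1}(i)$—can be reorganized into the coupled MJLS LQR Riccati recursion
\begin{equation*}
N_k(j)=Q_k+\sum_{j'\in\mathcal{S}}p_{jj'}A_k^\top(j)N_{k+1}(j')A_k(j)-W_k^\top(j)\Lambda_k^{-1}(j)W_k(j),
\end{equation*}
with $W_k(j):=\sum_{j'}p_{jj'}B_k^\top(j)N_{k+1}(j')A_k(j)$, and terminal condition $N_T(j)=\Psi\succeq 0$ obtained from the stationarity condition at $k=T$.

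I would then show $N_k(j)\succeq 0$ by backward induction on $k$. The base case is $N_T(j)=\Psi\succeq 0$. In the inductive step, if $N_{k+1}(j')\succeq 0$ for all $j'$, then $\Lambda_k(i)=R_k+\sum_{j}p_{ij}B_k^\top(i)N_{k+1}(j)B_k(i)\succeq R_k\succ 0$ is invertible, and the standard LQR completion-of-squares rewrites the Riccati right-hand side as $Q_k$ plus a sum of PSD terms involving $(A_k(j)-B_k(j)\Lambda_k^{-1}(j)W_k(j))$ and $R_k$, which is PSD. Hence $N_k(j)\succeq 0$ propagates all the way back, establishing $\Lambda_k(i)\succ 0$ for every $(k,i)$.

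Finally, I would close the argument by combining primal feasibility $C_k(i)\preceq 0$ with $\Lambda_k(i)\succ 0$ (hence nonsingular) and complementary slackness $\mathrm{Tr}(\Lambda_k(i)C_k(i))=0$: \cref{lemma1} precludes $C_k(i)$ from having any nonzero eigenvalue, so $C_k(i)=0$ and therefore $Y_k(i)=L_k(i)S_k^{-1}(i)L_k^\top(i)$ at the optimum. The main obstacle will be the matrix-calculus bookkeeping that turns the raw stationarity equations into the MJLS Riccati form—particularly the collapse of $S_k^{-1}(i)L_k^\top(i)\Lambda_k(i) L_k(i) S_k^{-1}(i)$ into $W_k^\top(i)\Lambda_k^{-1}(i)W_k(i)$ after substituting the $L_k$-stationarity identity; once that alignment is in place, both the backward induction and the application of \cref{lemma1} are routine.
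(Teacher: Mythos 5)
Your proof is correct, but it takes a genuinely different route from the paper's. The paper argues by contradiction, locally at each $(k,i)$: if $C_k(i)$ had a nonzero eigenvalue, \cref{lemma1} together with complementary slackness would force the multiplier of $C_k(i)\preceq 0$ to be singular; eliminating the equality multiplier between the $L_k(i)$- and $Y_k(i)$-stationarity conditions \cref{lm1,lm2} then gives $\rho_k(i)R_k = M_k(i)\bigl(I+K_k(i)A_k^{-1}(i)B_k(i)\bigr)$, whose determinant vanishes, contradicting $R_k\succ 0$. That argument is short but leans on the invertibility of $A_k(i)$ assumed in \cref{prob0}. You instead prove the stronger fact that the multiplier of $C_k(i)\preceq 0$ is positive definite, by additionally invoking the $S_k(i)$-stationarity and the terminal transversality condition to show that the (negated) costates of the covariance dynamics obey the coupled MJLS Riccati recursion, so that positive semidefiniteness propagates backward by the standard completion of squares. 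The bookkeeping you flag as the main obstacle does go through: the $L_k$-stationarity gives (in your notation) $\Lambda_k(i)L_k(i)S_k^{-1}(i)=-W_k(i)$, and since $\Lambda_k(i)=R_k+\sum_j p_{ij}B_k^\top(i)N_{k+1}(j)B_k(i)\succ 0$ is already available from the inductive hypothesis before it must be inverted, $S_k^{-1}(i)L_k^\top(i)\Lambda_k(i)L_k(i)S_k^{-1}(i)$ collapses to $W_k^\top(i)\Lambda_k^{-1}(i)W_k(i)$ and the induction closes with base case $N_T(j)=\lambda_F\succeq 0$. Your route costs more derivation (one extra family of stationarity conditions and careful index alignment) but buys two things: it dispenses with the invertibility of $A_k(i)$, and it identifies the dual variables with the MJLS-LQR value matrices, which gives a structural explanation for why the relaxation is tight.
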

\begin{proof}
For \cref{prob2}, we can define the Lagrangian as:
\begin{equation}
\begin{aligned}
        \mathcal{L}(\cdot) = J(\Xi) +&\sum_{k=0}^{T-1}\left[ \mathrm{Tr}(\Lambda^\top_kH_k^\Sigma) +\sum_{i\in\mathcal{S}}\left\{\mathrm{Tr}(M^\top_k(i)C_k(i)) \right\}\right] \\
        & + \mathrm{Tr}(\lambda^\top_F(\Sigma_T-\Sigma_f))
\end{aligned}
\end{equation}
where $C_k(i)$ is defined in \cref{constraint_Ck}, $\Lambda_k$, $M_k$ and $\lambda_F$ are Lagrange multipliers corresponding to $H_k^\Sigma=0$, $C_k(i)\preceq0$ and $\Sigma_T\preceq\Sigma_f$ respectively. $\Lambda_k$ is symmetric because of symmetry in \cref{covprop_sub} and $M_k$ and $\lambda_F$ are symmetric by definition. The relevant first-order KKT conditions for optimality are:
\begin{subequations}
    \begin{align}
        \tfrac{\partial \mathcal{L}}{\partial L_k(i)} &= 2\sum_{j\in\mathcal{S}}\pi^{(k)}_{ij}B_k^\top(i)\Lambda_kA_k(i)  +2M_k(i)L_k(i){S}_k^{-1}(i)=0 \label{lm1} \\
        \tfrac{\partial \mathcal{L}}{\partial Y_k(i)} &= \rho_k(i)R_k-M_k(i) +\sum_{j\in\mathcal{S}}\pi^{(k)}_{ij}B^\top_k(i)\Lambda_kB_k(i)=0 \label{lm2}\\
        \mathrm{Tr}(&M_k^\top(i)C_k(i))=0, \quad C_k\preceq 0, \quad M_k\succeq 0\label{cs1}
    \end{align}
\end{subequations}
where $\pi^{(k)}_{ij}=p_{ij}\rho_k(i)$. We now prove by contradiction that $C_k(i)=0$, or the relaxation is lossless. Let us suppose that $C_k(i)$ has at least one nonzero eigenvalue, which implies the corresponding Lagrange multiplier $M_k(i)$ is singular from \cref{lemma1} and \cref{cs1}. 
Rearranging \cref{lm1}, we get $\sum_{j\in\mathcal{S}}\pi^{(k)}_{ij}B_k^\top(i)\Lambda_k=-M_k(i)L_k(i){S}_k^{-1}(i)A_k^{-1}(i)$. Using this and the substitutions defined earlier in \cref{substitution}, we simplify \cref{lm2} as: $\rho_k(i)R_k=M_k(i)(I +K_k(i)A_k^{-1}(i)B_k^\top(i))$. Taking the determinant of both sides, we get the following:
\begin{equation*}\label{thm1_p}
    \mathrm{det}(\rho_k(i)R_k)=\mathrm{det}(M_k(i))\mathrm{det}(I +K_k(i)A_k^{-1}(i)B_k^\top(i))=0
\end{equation*}
which contradicts $\rho_k(i)R_k\succ 0$ from \cref{asm_rho}. Therefore, we have $C_k(i)=L_k(i)S^{-1}_k(i)L_k^\top(i)-Y_k(i)=0,\; \forall k,i.$
\end{proof}
\begin{remark}
{Similar relaxations cannot be performed for mean quadratic terms in the \cref{covprop1} because they involve rank-1 matrices which doesn't allow the use of Schur's complement to relax the non-convex substitutions as LMIs, motivating our two-step sequential mean-covariance optimization approach.}
\end{remark}
\section{Chance-Constrained MJLS Covariance Steering}
We now consider chance constraints for both state and control. For state, we consider intersection of hyperplane constraints and tube-like constraints around the mean trajectory. Mathematically, they're expressed as:
\begin{subequations}\label{cc_x}
    \begin{align}
        \mathbb{P}(a^\top_j x_k + b_j\leq0, \forall j) &\geq 1-\delta_{x}\label{cc_state}\\
    \mathbb{P}(\|x_k-\mu_k\|\leq d_{\max}) &\geq 1 - \epsilon_x\label{cc_unorm}
    \end{align}
\end{subequations}
Likewise, for control, we consider
\begin{subequations}\label{cc_u}
    \begin{align}
        \mathbb{P}(f^\top_j u_k(i) + g_j \leq0, \forall j) &\geq 1-\delta_{u}(i)\label{cc_state}\\
    \mathbb{P}(\|u_k(i)\|\leq u_{\max}(i)) &\geq 1 - \epsilon_u(i)\label{cc_unorm}
    \end{align}
\end{subequations}

Previous works \cite{ono_probabilistic_2013,oguri_chance-constrained_2024,okamoto_optimal_2018,okamoto_optimal_2019} have focused on deterministic and convex reformulations of \cref{cc_x,cc_u}, but they usually assume state distribution to be Gaussian. MJLS dynamics result in a mixture of exponential Gaussians ($N^T$ Gaussians for $N$ modal MJLS at time $T$.) Therefore, we utilize one-sided Chebyshev/Cantelli's inequality\cite{durrett_probability_2019} and the multivariate Chebyshev inequality\cite{chen_new_2011} to convert the chance constraints to deterministic path constraints. Note that while the constraint on the state $x_k$ is mode-independent, the control chance constraints are imposed for a given mode $i$. The multi-modal nature of the control variable $u_k$ and practical applications motivate us to consider mode-dependent control chance constraints as it would be less conservative than mode-independent control chance constraints. For mode-conditioned control chance constraints, the mean and covariance is given by: $\mathbb{E}[u_k(i)|r_k=i]=\overline{u}_k(i)$ and $\mathbb{E}[(u_k(i)-\overline{u}_k(i))(u_k(i)-\overline{u}_k(i))^\top|r_k=i]=\frac{Y_k(i)}{\rho_k(i)}$ as shown in the appendix.

\subsection{Deterministic Formulation}
\Cref{det_cc_halfspace,det_cc_norm} are useful in the deterministic formulation of half-space and max-norm bound constraints.
\begin{lemma}\label{det_cc_halfspace}
    Let $v \in \mathbb{R}^{n_v}$ be a random variable with mean $\overline{v}$ and covariance $V$. Then, $\mathbb{P}[a_j^\top {v} + b_j \leq 0,\forall j]\geq 1-\epsilon$ is implied by: 
    \begin{align}\label{det_cc_halfspace_eq}
         a^\top_j\overline{v} + b_j + \sqrt{\frac{1-\epsilon_j}{\epsilon_j}  a^\top_j V a_j} \leq 0, \quad\sum_j\epsilon_j\leq \epsilon
    \end{align}
\end{lemma}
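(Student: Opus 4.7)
The plan is to prove the implication in two distinct probabilistic steps: first a union bound to decouple the joint chance constraint into individual half-space violations, and then Cantelli's one-sided inequality to derive the closed-form deterministic tightening for each individual constraint.

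First, I would rewrite the event of interest in terms of its complement: $\mathbb{P}(a_j^\top v + b_j \leq 0,\,\forall j) \geq 1 - \epsilon$ is equivalent to bounding $\mathbb{P}\!\left(\bigcup_j \{a_j^\top v + b_j > 0\}\right) \leq \epsilon$. Boole's inequality (the union bound) then yields
\begin{equation*}
\mathbb{P}\!\left(\bigcup_j \{a_j^\top v + b_j > 0\}\right) \leq \sum_j \mathbb{P}(a_j^\top v + b_j > 0).
\end{equation*}
Thus, it suffices to allocate a risk budget $\epsilon_j$ to each half-space such that $\mathbb{P}(a_j^\top v + b_j > 0) \leq \epsilon_j$ and $\sum_j \epsilon_j \leq \epsilon$, which is exactly the second condition of \cref{det_cc_halfspace_eq}.

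Second, for each fixed $j$ I would define the scalar random variable $Z_j \triangleq a_j^\top v + b_j$, whose mean and variance are $\mu_{Z_j} = a_j^\top \overline{v} + b_j$ and $\sigma^2_{Z_j} = a_j^\top V a_j$ respectively, directly from the definitions of $\overline{v}$ and $V$. Cantelli's (one-sided Chebyshev) inequality states that for any $t > 0$,
\begin{equation*}
\mathbb{P}(Z_j - \mu_{Z_j} \geq t) \leq \frac{\sigma^2_{Z_j}}{\sigma^2_{Z_j} + t^2}.
\end{equation*}
Provided that $\mu_{Z_j} < 0$ (which the deterministic condition will enforce a posteriori), I set $t = -\mu_{Z_j} > 0$ and obtain $\mathbb{P}(Z_j > 0) \leq \sigma^2_{Z_j}/(\sigma^2_{Z_j} + \mu^2_{Z_j})$.

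Finally, I would show that the deterministic inequality in \cref{det_cc_halfspace_eq} is exactly the condition for the Cantelli bound to be at most $\epsilon_j$. Requiring $\sigma^2_{Z_j}/(\sigma^2_{Z_j} + \mu^2_{Z_j}) \leq \epsilon_j$ is algebraically equivalent to $\mu^2_{Z_j} \geq \tfrac{1-\epsilon_j}{\epsilon_j}\sigma^2_{Z_j}$; combined with $\mu_{Z_j} < 0$, this rearranges to $\mu_{Z_j} + \sqrt{\tfrac{1-\epsilon_j}{\epsilon_j}\sigma^2_{Z_j}} \leq 0$, which upon substituting the expressions for $\mu_{Z_j}$ and $\sigma^2_{Z_j}$ is precisely the first inequality of \cref{det_cc_halfspace_eq}. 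Chaining the two steps gives $\mathbb{P}(\bigcup_j\{a_j^\top v + b_j > 0\}) \leq \sum_j \epsilon_j \leq \epsilon$, establishing the claim. I do not anticipate a genuine obstacle here; the only subtle point is justifying the sign assumption $\mu_{Z_j} < 0$ needed to apply Cantelli with $t = -\mu_{Z_j}$, but this is automatically guaranteed by the deterministic condition itself (since $\sqrt{(1-\epsilon_j)/\epsilon_j \cdot a_j^\top V a_j} \geq 0$ and $\epsilon_j < 1$ forces $a_j^\top \overline{v} + b_j \leq 0$, and strict negativity holds whenever $V \not\preceq 0$ along $a_j$).
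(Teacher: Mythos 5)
Your proof is correct and is essentially the argument the paper relies on: the paper itself gives no derivation, deferring to Theorem~3.1 of \cite{calafiore_distributionally_2006} and Lemma~1 of \cite{oguri_chance-constrained_2024}, and your combination of Boole's inequality for risk allocation with Cantelli's one-sided inequality applied to $Z_j = a_j^\top v + b_j$ is precisely the standard argument underlying those citations. Your handling of the sign condition $a_j^\top\overline{v}+b_j\leq 0$ (forced by the deterministic constraint itself) is the right way to justify setting $t=-\mu_{Z_j}$ in Cantelli's bound.
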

\begin{proof}
    See Theorem 3.1 in \cite{calafiore_distributionally_2006}, and Lemma 1 in \cite{oguri_chance-constrained_2024}.
\end{proof}

\begin{lemma}\label{det_cc_norm}
    Let $v \in \mathbb{R}^{n_v}$ be a random variable with mean $\overline{v}$ and covariance $V$. Then  $\mathbb{P}[\|{v}\| \leq v_{\max}] \geq 1 - \epsilon$ is implied by:
    \begin{align}\label{det_cc_norm_eq}
        \|\overline{v}\| +\sqrt{\frac{n_{v}}{\epsilon} \cdot\lambda_{\max}(V)} \leq v_{\max}
    \end{align}
\end{lemma}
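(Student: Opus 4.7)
The plan is to combine the multivariate Chebyshev inequality of \cite{chen_new_2011} with the triangle inequality. The multivariate Chebyshev inequality states that, for any $k>0$,
\begin{equation*}
    \mathbb{P}\bigl[(v-\overline{v})^\top V^{-1}(v-\overline{v}) \geq k^2\bigr] \leq \frac{n_v}{k^2},
\end{equation*}
assuming $V\succ 0$ (the degenerate case $V\succeq 0$ can be handled by a standard limiting argument, or by restricting to the range of $V$). First I would convert this quadratic-form tail bound into a tail bound on the ordinary Euclidean norm by noting $V^{-1}\succeq \lambda_{\max}(V)^{-1} I$, so that
\begin{equation*}
    (v-\overline{v})^\top V^{-1}(v-\overline{v}) \;\geq\; \frac{\|v-\overline{v}\|^2}{\lambda_{\max}(V)}.
\end{equation*}
This inclusion of events yields $\mathbb{P}\bigl[\|v-\overline{v}\|^2 \geq k^2 \lambda_{\max}(V)\bigr]\leq n_v/k^2$.

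Next I would set $k=\sqrt{n_v/\epsilon}$ to obtain
\begin{equation*}
    \mathbb{P}\!\left[\|v-\overline{v}\| \leq \sqrt{\tfrac{n_v}{\epsilon}\lambda_{\max}(V)}\right] \;\geq\; 1-\epsilon.
\end{equation*}
Then, using the triangle inequality $\|v\|\leq \|\overline{v}\|+\|v-\overline{v}\|$, the event $\{\|v-\overline{v}\|\leq \sqrt{(n_v/\epsilon)\lambda_{\max}(V)}\}$ is contained in $\{\|v\|\leq \|\overline{v}\|+\sqrt{(n_v/\epsilon)\lambda_{\max}(V)}\}$. Finally, if the deterministic inequality \cref{det_cc_norm_eq} holds, then this latter event is in turn contained in $\{\|v\|\leq v_{\max}\}$, so monotonicity of probability chains the implications and delivers $\mathbb{P}[\|v\|\leq v_{\max}]\geq 1-\epsilon$.

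Essentially no step is an obstacle: the only subtle piece is the monotonic step from the $V^{-1}$-weighted norm to the Euclidean norm, which must be argued by the ordering $V^{-1}\succeq \lambda_{\max}(V)^{-1}I$ rather than by any probabilistic manipulation. If needed, the proof can be written in three lines by directly citing \cite{chen_new_2011} and \cref{det_cc_halfspace} for the structural pattern; in fact one could alternatively derive this as a corollary of the half-space version by taking worst-case hyperplane directions, but the direct Chebyshev route above is cleaner.
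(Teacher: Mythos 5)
Your proposal is correct and follows essentially the same route as the paper: both apply the multivariate Chebyshev inequality of \cite{chen_new_2011} with $t=\sqrt{n_v/\epsilon}$, pass from the $V^{-1}$-weighted ellipsoidal confidence region to a Euclidean ball of radius $\sqrt{(n_v/\epsilon)\lambda_{\max}(V)}$ via the spectral bound, and finish with the triangle inequality and monotonicity of probability. The only difference is presentational -- you phrase the key step as the Loewner ordering $V^{-1}\succeq\lambda_{\max}(V)^{-1}I$, while the paper phrases it geometrically as the containment $\mathcal{E}\subseteq\mathcal{B}_v$ -- which is the same fact.
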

\begin{proof}

    Given that $\|\overline{v}\| +\sqrt{\frac{n_{v}}{\epsilon} \cdot\lambda_{\max}(V)} \leq v_{\max}$, we know $\mathbb{P}(\|v\|\leq v_{\max})\geq\mathbb{P}(\|v\|\leq \|\overline{v}\| +\sqrt{\frac{n_{v}}{\epsilon} \cdot\lambda_{\max}(V)})$\ldots (A). This is described in \Cref{fig:lemma_schematic_b}. Now using the multivariate Chebyshev inequality \cite{chen_new_2011} for $v$, we get: $
    \mathbb{P}((v-\overline{v})^\top  V^{-1} (v-\overline{v})\geq t^2) \leq \frac{n_v}{t^2}$\ldots (B). Put $t=\sqrt{\frac{n_v}{\epsilon}}$, and considering the complement of the event in (B), we get $\mathbb{P}((v-\overline{v})^\top  V^{-1} (v-\overline{v})\leq \frac{n_v}{\epsilon})\geq 1-\epsilon$. Note that $(v-\overline{v})^\top  V^{-1} (v-\overline{v})\leq \frac{n_v}{\epsilon}$ denotes an ellipsoid centered at $\overline{v}$ with shape matrix $V$. We first find the maximum value of $\|v\|$ inside this ellipse. Let $\mathcal{E}=\{v:(v-\overline{v})^\top  V^{-1} (v-\overline{v})\leq \frac{n_v}{\epsilon}\}$, then: $\max_{v\in\mathcal{E}} \|v\| \leq \max_{v\in\mathcal{E}}\|v-\overline{v}\| + \|\overline{v}\|=\sqrt{\frac{n_v}{\epsilon}\lambda_{\max}(V)} + \|\overline{v}\|$. Now construct a ball $\mathcal{B}_v$ centered at origin with radius $\|\overline{v}\| +\sqrt{\frac{n_{v}}{\epsilon} \cdot\lambda_{\max}(V)}$. Geometrically, this is shown in \Cref{fig:lemma_schematic_a} for a $n_v=2$. Then, $\mathbb{P}(v\in\mathcal{B}_v)=\mathbb{P}(\|v\|\leq \sqrt{\frac{n_v}{\epsilon}\lambda_{\max}(V)} + \|\overline{v}\|)\geq \mathbb{P}(v\in\mathcal{E})$ since $\mathcal{E}\subseteq\mathcal{B}_v$. But $\mathbb{P}(v\in\mathcal{E})\geq 1-\epsilon$. Therefore, $\mathbb{P}\left(\|v\|\leq \|\overline{v}\| +\sqrt{\frac{n_{v}}{\epsilon}\lambda_{\max}(V)}\right)\geq\mathbb{P}(v\in\mathcal{E})\geq 1-\epsilon$. Combining this with (A), we get $\mathbb{P}(\|v\|\leq v_{\max})\geq\mathbb{P}(\|v\|\leq \|\overline{v}\| +\sqrt{\frac{n_{v}}{\epsilon} \cdot\lambda_{\max}(V)})\geq 1-\epsilon$.
 \begin{figure}[ht]
    \centering 
    \begin{subfigure}[b]{0.2\textwidth}
        \centering
        
        \begin{tikzpicture}[scale=0.5]

            \def\muval{1.5}
            \def\vmaxval{5.5} 
            
            \def\curvepath{(-0.5, 0.01) .. controls (0.4, 4.0) and (0.8, 4.0) .. (1.5, 2.5) .. controls (2.2, 1.0) and (4.5, 0.3) .. (7.5, 0.01)} 
            
            
            \begin{scope}
                \clip (-0.5, 0) rectangle (\vmaxval, 4.2); 
                \fill[blue, opacity=0.15] \curvepath -- (\vmaxval, 0) -- (-0.5, 0) -- cycle;
            \end{scope}
        
            \begin{scope}
                \clip (-0.5, 0) rectangle (\muval, 4.2);
                \fill[red, opacity=0.2] \curvepath -- (\muval, 0) -- (-0.5, 0) -- cycle;
            \end{scope}
        
            \draw[thick] \curvepath;
        
            \draw[-{Stealth[length=2mm]}] (-1, 4.5) -- (8, 4.5); 
            
            \draw (\muval, 4.6) -- (\muval, 4.4) node[above, font=\small, red] {$||\overline{v}||+\sqrt{\frac{n}{\epsilon}\lambda_{max}(V)} $};
            \draw (\vmaxval, 4.6) -- (\vmaxval, 4.4) node[above, font=\small, blue] {$v_{\max}$};
            
            \draw[-{Stealth[length=2mm]}] (-1, 0) -- (8, 0) node[below left] {$||v||$}; 
            \draw (0, -0.2) -- (0, 4.2);
            
            \node at (-0.6, 3.75) {${p(\cdot)}$};
            
            \draw[dashed] (\muval, 4.5) -- (\muval, 0);
            \draw[dashed] (\vmaxval, 4.5) -- (\vmaxval, 0);
            
        \end{tikzpicture}
        \caption{}
        \label{fig:lemma_schematic_b}
    \end{subfigure}
    \hspace{8mm}
    \begin{subfigure}[b]{0.2\textwidth}
        \centering
        \begin{tikzpicture}[scale=0.3]
            \draw[-{Stealth[length=3mm]}] (-5,0) -- (5,0) node[below right] {$v_1$};
            \draw[-{Stealth[length=3mm]}] (0,-5) -- (0,5) node[above left] {$v_2$};
            \node at (0,0) [below left] {$0$};

            \coordinate (xi_bar_center) at (1.8, 1.4);

            \draw[red, fill=red!20, opacity=0.7] (0,0) circle (3.88);

            \node[red, align=left] at (3.2, 3.8) {$\mathcal{B}_v$};

            \begin{scope}[shift={(xi_bar_center)}, rotate=35]
                \draw[blue, fill=blue!30, opacity=0.8] (0,0) ellipse (1.6 and 0.9);
            \end{scope}

            \node[blue, align=center] at (2.8, 0.5) {$\mathcal{E}$};

            \draw[-{Stealth[length=2mm]}, thick] (0,0) -- (xi_bar_center) node[midway, above left, black] {$\overline{v}$};
            \fill (xi_bar_center) circle (1.5pt);
        \end{tikzpicture}
        \caption{}
        \label{fig:lemma_schematic_a}
    \end{subfigure}%
    

    \caption{Schematics for the proof of \cref{det_cc_norm}. Figure (a) shows that $\mathbb{P}(\|v\|\leq v_{\max})\geq \mathbb{P}\left(\|v\|\leq\|\overline{v}\| + \sqrt{\frac{n}{\epsilon}\lambda_{\max}(V)}\right)$. Figure (b) shows that $\mathbb{P}(v\in\mathcal{B}_v)\geq\mathbb{P}(v\in\mathcal{E})$ where the ball $\mathcal{B}_v$ is given by $\{v:\|v\|\leq\|\overline{v}\|+\sqrt{\frac{n_v}{\epsilon}\lambda_{\max}(V)}\}$ and the ellipsoid $\mathcal{E}$ is given by $\{v:(v-\overline{v})^\top  V^{-1} (v-\overline{v})\leq \frac{n_v}{\epsilon}\}$.}
    \label{fig:both_diagrams}
\end{figure}
\end{proof}
\begin{remark}
    \noindent{Note that while \cref{det_cc_halfspace_eq,det_cc_norm_eq} are linear in the mean variables, they are nonconvex in the covariance terms. We convexify them by rearranging and squaring both sides as:}
\begin{equation}\label{cc_x_square}
    \begin{aligned}
         \frac{1-\epsilon}{\epsilon}  a^\top_j V a_j \leq  (a_j^{\top}\bar{v} + b_j)^2
    \end{aligned}
\end{equation}
\begin{equation}\label{cc_u_square}
    \begin{aligned}
         \frac{n_{v}}{\epsilon}{\lambda_{\max}(V)} \leq (v_{\max}-\|\bar{v}\|)^2
    \end{aligned}
\end{equation}
which are convex in the covariance $V$.
\end{remark}

\subsection{Iterative Convex Control Synthesis Framework}

Using the deterministic formulation of chance constraints in \cref{det_cc_halfspace,det_cc_norm}, we can now incorporate them in our two-step optimization framework. Note that the chance constraints introduce two-way coupling in the problem, while previously only the covariance problem depended on the mean trajectory and not vice-versa. To solve this, we propose a hierarchical optimization framework which iteratively solves the mean and covariance optimization problem by relaxing the chance constraints. We relax the constraints by adding a slack variable, and then penalize its 1-norm in the objective function. Using the 1-norm promotes sparsity of the slack variable, and has demonstrated favorable convergence properties in other hierarchical optimization frameworks \cite{mao_successive_nodate,oguri_successive_2023}. We now define the relaxed mean and covariance problems in \cref{prob3} and \cref{prob4} respectively as follows:
\begin{problem}\label{prob3}
    Given a covariance trajectory $\Xi$, solve:
    \begin{subequations}\label{ocp_mean_rel}
    \begin{align}
    &\min_{\mu_k,\overline{u}_k(i)} J_\mu^c& \label{eq:mean_obj} \\
    &\mathrm{s.t.}\;\;\;\forall k \in \mathbb{Z}_0^{T-1}, i \in\mathcal{S},j \nonumber \\  
    &\quad\quad\mathrm{\cref{meanprob_uc1}, \cref{meanprob_uc2}}&\\
    a_j^\top \mu_k& + b_j + \sqrt{\frac{1-\delta_{x,j}}{\delta_{x,j}}a_j^\top \Sigma_k a_j} \leq \beta_{1,k}, \\
        f_j^\top \overline{u}_k(i)& + g_j + \sqrt{\frac{1-\delta_{u,j}(i)}{\delta_{u,j}(i)}p_j^\top \frac{Y_k(i)}{\rho_k(i)}p_j} \leq \beta_{2,k}\\
     \|\overline{u}_k(i)\|+& \sqrt{\frac{n_u(i)}{\epsilon_u(i)}\lambda_{\max}(\frac{Y_k(i)}{\rho_k(i)})} \leq u_{\max}(i) + \beta_{3,k}
    \end{align}
    \end{subequations}
where $\sum_j\delta_{x,j}\leq\delta_x$, $\sum_j\delta_{u,j}(i)\leq\delta_u(i)$, $J_\mu^c=J(\tau) + \sum_m \alpha_m \|\beta_m\|_1$, with $J(\tau)$ defined in \cref{Jtau}. $\beta_{m,k}>0$ is the slack variable for $m$-th constraint at time step $k$ and $\beta_m=[\beta_{m,0}\ldots\beta_{m,T-1}]^\top$. $\alpha_m>0$ is the weight corresponding to the slack variable vector 1-norm $\|\beta_m\|_1$ in $J_\mu^c$.
\end{problem}

\begin{problem} \label{prob4}
    Given a mean trajectory $\tau$, solve:
    \begin{subequations}\label{ocp_cov_cc}
\begin{align}
    \min_{{\Sigma}_k,L_k(i),Y_k(i)} J^c_\Sigma\hspace{2cm}& \label{eq:cov_obj} \\
    \mathrm{s.t.}\;\;\; \forall k \in \mathbb{Z}_0^{T-1}, \; i \in \mathcal{S}, j, \nonumber \hspace{5mm}&\\
    \mathrm{\cref{cov_def}, \cref{covprob_uc1},\cref{covprob_uc2}}&\\
         {\frac{1-\delta_{x,j}}{\delta_{x,j}}a_j^\top \Sigma_k a_j} - (a_j^\top \mu_k + b_j)^2& \leq\zeta_{1,j}, \label{c1} \\
        {\frac{1-\delta_{u,j}(i)}{\delta_{u,j}(i)}p_j^\top\frac{Y_k(i)}{\rho_k(i)} p_j} - (f_j^\top \mu_k + g_j)^2& \leq\zeta_{2,j} , \label{c2} \\
        \frac{n_x}{\epsilon_x}\lambda_{\max}(\Sigma_k)&\leq\zeta_{3,j}\label{c3} \\
        \frac{n_u(i)}{\epsilon_u(i)}\lambda_{\max}(\frac{Y_k(i)}{\rho_k(i)}) - u_m^2&\leq\zeta_{4,j} \label{c4}
\end{align}
\end{subequations}
where $\sum_j\delta_{x,j}\leq\delta_x$, $\sum_j\delta_{u,j}(i)\leq\delta_u(i)$, $u_m=u_{\max} - \|\overline{u}_k(i)\|$, and $ J_\Sigma^c=J(\Xi)+\sum_j \alpha_m\|\zeta_m\|_1$ with $J(\Xi)$ defined in \cref{JXi}. $\zeta_{m,k}>0$ is the slack variable for $m$-th constraint at time $k$ and $\zeta_m=[\zeta_{m,0}\ldots\zeta_{m,T-1}]^\top$. $\alpha_m>0$ is the weight for slack variable vector 1-norm $\|\zeta_m\|_1$ in $J_\Sigma^c$.
\end{problem}
\begin{assumption}\label{asm_slater2}
    Slater's condition is satisfied in \cref{prob4} for $\zeta_m=0$. This ensures the convex relaxation in \cref{rel1} remains lossless while satisfying chance constraints at $\zeta_m=0$.
\end{assumption}
We now show that the convex relaxation $Y_k(i)\succeq L_k(i)\Sigma_k^{-1}L_k^\top(i)$ remains lossless in \cref{prob4}. 
\begin{theorem}
    Under \cref{asm_rho,asm_cov,asm_slater2}, the optimal solution to \cref{prob4} satisfies $Y_k(i)=L_k(i){{S}^{-1}_k(i)}L_k^\top(i)$.
\end{theorem}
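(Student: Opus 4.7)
The plan is to extend the KKT argument of \cref{thm1} to the chance-constrained \cref{prob4} by tracking precisely how the additional Lagrange multipliers for \cref{c1,c2,c3,c4} alter the stationarity conditions. The key structural observation is that none of the chance constraints depend on $L_k(i)$; they couple only the (fixed) mean $\mu_k$, the covariance $\Sigma_k$, and $Y_k(i)$. Consequently, the stationarity condition $\partial \mathcal{L}/\partial L_k(i) = 0$ is identical to \cref{lm1}, and only the $Y_k(i)$-stationarity condition is modified.

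Before differentiating, I would recast the $\lambda_{\max}$ constraint \cref{c4} as the equivalent LMI $\tfrac{n_u(i)}{\epsilon_u(i)} Y_k(i) \preceq (u_m^2 + \zeta_{4,j}) I$, so that the Lagrangian is smoothly differentiable in $Y_k(i)$. Let $M_k(i) \succeq 0$ denote, as in \cref{thm1}, the multiplier of $C_k(i) \preceq 0$, and collect into a matrix $P_k(i)$ all contributions to $\partial \mathcal{L}/\partial Y_k(i)$ from the multipliers of \cref{c2} and \cref{c4}. Each such contribution is either a nonnegative scalar times $p_j p_j^\top$ (from \cref{c2}) or $\tfrac{n_u(i)}{\epsilon_u(i)}$ times a PSD multiplier (from the LMI form of \cref{c4}); hence $P_k(i) \succeq 0$. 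The modified $Y_k(i)$-stationarity condition then reads
\begin{equation*}
    \rho_k(i) R_k - M_k(i) + \sum_{j\in\mathcal{S}} \pi^{(k)}_{ij} B_k^\top(i) \Lambda_k B_k(i) + P_k(i) = 0.
\end{equation*}

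I would then replicate the contradiction in \cref{thm1}. Assume $C_k(i)$ has a nonzero eigenvalue; then \cref{lemma1} together with complementary slackness forces $M_k(i)$ to be singular. Rearranging the unchanged \cref{lm1} and substituting into the modified $Y_k(i)$-stationarity yields
\begin{equation*}
    M_k(i)\bigl(I + K_k(i) A_k^{-1}(i) B_k^\top(i)\bigr) = \rho_k(i) R_k + P_k(i),
\end{equation*}
which is precisely the identity derived in \cref{thm1} but with $\rho_k(i) R_k$ replaced by $\rho_k(i) R_k + P_k(i)$. Because $\rho_k(i) R_k \succ 0$ by \cref{asm_rho} and $R_k \succ 0$, and $P_k(i) \succeq 0$, the right-hand side is positive definite, so its determinant is strictly positive, contradicting $\det(M_k(i)) = 0$. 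The one non-routine step, and the natural candidate for where the argument could fail, is verifying that every chance-constraint contribution to $\partial \mathcal{L}/\partial Y_k(i)$ is PSD; this is the reason for first recasting the $\lambda_{\max}$ term as an LMI rather than invoking a subgradient calculus for the nonsmooth maximum eigenvalue. Finally, \cref{asm_slater2} guarantees strong duality at a strictly feasible point with $\zeta_m = 0$, so that these KKT conditions are both necessary and sufficient at the optimum and the chain of implications above applies.
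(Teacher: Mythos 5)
Your proposal is correct and follows essentially the same route as the paper: the chance constraints leave the $L_k(i)$-stationarity condition untouched, contribute only positive-semidefinite terms to the $Y_k(i)$-stationarity condition, and the determinant contradiction of Theorem~1 goes through with $\rho_k(i)R_k$ replaced by a positive-definite sum. The only cosmetic difference is that the paper handles the nonsmooth $\lambda_{\max}(Y_k(i))$ term via the rank-one eigenvector term $v_k(i)v_k^\top(i)$ (a subgradient) rather than your LMI reformulation; both yield the required PSD contribution.
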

\begin{proof}
We can define the Lagrangian for \cref{prob4} as follows:
\begin{equation}
\begin{aligned}
        \mathcal{L}(\cdot) =& J_\Sigma^c +\sum_{k=0}^{T-1}\Bigl[ \mathrm{Tr}(\Lambda^\top_kH_k^\Sigma)+ \lambda_{x,n} e_{x,n}+ \lambda_{x,h} e_{x,h}\\&+\sum_{i\in\mathcal{S}}\left\{\mathrm{Tr}(M^\top_k(i)C_k(i)) + \lambda_{u,n}(i) e_{u,n}(i) \right.\\
        &\left.+ \lambda_{u,h}(i) e_{u,h}(i) \right\}\Bigr] + \mathrm{Tr}(\lambda^\top_F(\Sigma_T-\Sigma_f))
\end{aligned}
\end{equation}
where $C_k(i)$ is given in \cref{constraint_Ck} and $e_{x,h}$, $e_{u,h}$, $e_{x,n}$, and $e_{u,n}$ represent \cref{c1,c2,c3,c4} respectively with $\lambda_{x,h}$, $\lambda_{u,h}$, $\lambda_{x,n}$, and $\lambda_{u,n}$ being the respective Lagrange multipliers. $\Lambda_k$,$M_k$ and $\lambda_F$ are Lagrange multipliers corresponding to $H_k^\Sigma=0$, $C_k(i)\preceq0$ and $\Sigma_T\preceq\Sigma_f$ respectively, and are all symmetric as explained in the proof of \cref{thm1}. The relevant first-order KKT conditions for optimality are:
\begin{subequations}
    \begin{align}
        &\tfrac{\partial \mathcal{L}}{\partial L_k(i)} = 2\sum_{j\in\mathcal{S}}\pi^{(k)}_{ij}B_k^\top(i)\Lambda_kA_k(i)+2M_k(i)L_k(i)S_k^{-1}(i)\notag\\&\hspace{7cm}=0 \label{lm1c} \\
        &\tfrac{\partial \mathcal{L}}{\partial Y_k(i)} = \rho_k(i)R_k-M_k(i)  + \lambda_{u,n}\tfrac{n_u(i)}{\epsilon_u(i)} v_k(i)v_k^\top(i) \nonumber \\
        &+\lambda_{u,h} \tfrac{1-\delta_{u,j}(i)}{\delta_{u,j}(i)} f_j f^\top_j+\sum_{j\in\mathcal{S}}\pi^{(k)}_{ij}B^\top_k(i)\Lambda_kB_k(i)=0 \label{lm2c}\\
        &\mathrm{Tr}(M_k^\top(i)C_k(i))=0, \quad C_k\preceq 0, \quad M_k \succeq 0,\nonumber\\
        & \hspace{2cm}\lambda_{x,n},\lambda_{x,h},\lambda_{u,h},\lambda_{u,n}\geq0\label{cs2}
    \end{align}
\end{subequations}
\noindent{where $\pi^{(k)}_{ij}=p_{ij}\rho_k(i)$ and $v_k(i)$ is the eigenvector of $Y_k(i)$ corresponding to its maximum eigenvalue. We now prove by contradiction that $C_k(i)=0$, or the relaxation is lossless. Let us suppose that $C_k(i)$ has at least one nonzero eigenvalue, which implies $M_k(i)$ is singular from \cref{lemma1} and \cref{cs2}. Rearranging \cref{lm1c}, we get $\sum_{j\in\mathcal{S}}\pi^{(k)}_{ij}B_k^\top(i)\Lambda_k=-M_k(i)L_k(i)S_k^{-1}(i)A_k^{-1}(i)$. Using this and substitutions defined in \cref{substitution}, we simplify \cref{lm2c} as: $\rho_k(i)R_k+\lambda_{u,n}\tfrac{n_u(i)}{\epsilon_u(i)}  v_k(i)v_k^\top(i)+\lambda_{u,h} \tfrac{1-\delta_{u,j}(i)}{\delta_{u,j}(i)} f_j f^\top_j=M_k(i)(I +K_k(i)A_k^{-1}(i)B_k^\top(i))$. Representing the LHS as $\Delta_k(i)$ and taking the determinant of both sides, we get:}
\begin{equation*}\label{t2p1}
    \mathrm{det}(\Delta_k(i))=\mathrm{det}(M_k(i))\mathrm{det}(I +K_k(i)A_k^{-1}(i)B_k^\top(i))=0
\end{equation*}
From \cref{asm_rho}, we have that $\rho_k(i)R_k\succ 0$. Now we know $\lambda_{u,n}\tfrac{n_u(i)}{\epsilon_u(i)}  v_k(i)v_k^\top(i)+\lambda_{u,h} \tfrac{1-\delta_{u,j}(i)}{\delta_{u,j}(i)} f_j f^\top_j\geq0$ which implies $\Delta_k(i)>0$, resulting in a contradiction. Hence, $C_k(i)=L_k(i){S}^{-1}_k(i)L_k^\top(i)-Y_k(i)=0\;\forall k\in\mathbb{Z}_0^{T-1}, i \in\mathcal{S}$.
\end{proof}
\subsection{Sequential Convex Optimization Framework}
We now describe the sequential convex optimization framework which is used to design optimal control laws for MJLS covariance steering with chance constraints. The framework requires a mean trajectory to be initialized (which is obtained by optimizing the unconstrained mean trajectory in \cref{prob1}) and then solve \cref{prob4} and \cref{prob3} iteratively with increasing weights on the slack variables until they reach negligible values. This framework is described in \Cref{algorithm_1}.

\begin{algorithm}[H]
    \caption{MJLS Chance-constrained covariance steering}\label{algorithm_1}
    \begin{algorithmic}[1]
        \State Set tol, initial weights $\{\alpha_m\}$, and $\eta>1$;
        \State Initialize mean trajectory $\tau$ by solving \cref{prob1};
        \While{NotConverged}
        \State Solve \cref{prob4} with last stored mean trajectory $\tau$ to get new covariance trajectory $\Xi$;
        \State Compute new mean trajectory $\tau$ by solving \cref{prob3} using revised covariance trajectory $\Xi$ from previous step ;
        \State Store values of slack variables vector $\beta_m$ and $\zeta_m$;
        \If{$\max_m\|\beta_m\|_\infty$$>$tol \textbf{or} $\max_m\|\zeta_m\|_\infty$ $>$tol}
        \State increase weights $\{\alpha_m\}$ by a factor $\eta$;
        \Else
        \State Solution converged. \textbf{break;}
        \EndIf
\EndWhile
    \State \Return $\tau$ and $\Xi$
    \end{algorithmic}
\end{algorithm}

\section{Numerical Simulations}

In this section, we demonstrate our framework in two different numerical examples. For the first example, we use the following dynamics:

\begin{equation*}
    \begin{aligned}
        A_k(1) &= \begin{bsmallmatrix} 
        -0.2 & 1 \\ 
        -0.1 & 0.1 
       \end{bsmallmatrix} &B_k(1) &= \begin{bsmallmatrix} 
        1 & 0.5 \\ 
        2 & 0 
       \end{bsmallmatrix}&G_k(1) &= \begin{bsmallmatrix} 
        1 & 0 \\ 
        0 & 1 
       \end{bsmallmatrix}\\
A_k(2) &= \begin{bsmallmatrix} 
        0.2 & 0.1 \\ 
        -0.5 & 0.1 
       \end{bsmallmatrix} &
B_k(2) &= \begin{bsmallmatrix} 
        0 & 1 \\ 
        -1 & 2 
       \end{bsmallmatrix}&
G_k(2) &= \begin{bsmallmatrix} 
        0.5 & 0 \\ 
        0 & 0.5 
       \end{bsmallmatrix} \\
       \mathbb{P} &= \begin{bsmallmatrix} 
      0.8 & 0.2 \\ 
      0.9 & 0.1 
     \end{bsmallmatrix} & c_k &= \begin{bsmallmatrix}
         0.01\\0.01
     \end{bsmallmatrix} &
T & =6
    \end{aligned}
    \label{mjls_eg}
\end{equation*}

The initial and final conditions are set as $\rho_0=[0.3,0.7]$, ${\mu}_0=[25,40]^\top$, ${\Sigma}_0=6I$ and $\mu_f=[5,10]^\top$, $\Sigma_f=3I$. We set a half-space state chance constraint of $\mathbb{P}(a^\top x+b<0)\geq 0.95$, where $a=[0,-1]^\top$ and $b=-10$ and a control norm bound constraint $\mathbb{P}(\|u_k(i)\|\leq u_{\max})\geq0.95$ where $u_{\max}=8$. We consider a zero-mean gaussian distribution $\mathcal{N}(0,I_{n_w})$ for $w_k$. For sampling $x_0$, we also consider a normal distribution with mean $\mu_0$ and covariance $\Sigma_0$.

We use the CVXPY package\cite{diamond_cvxpy_2016} with MOSEK solver\cite{aps_mosek_nodate} to run \Cref{algorithm_1} on a 2023 Macbook Pro M1 Pro laptop with 16 GB of RAM. All initial weights are set at $\alpha_m=10^2$, with $\eta=1.5$ and $tol=10^{-6}$. It takes 3.43 seconds and $7$ iterations for \Cref{algorithm_1} to converge to the preset tolerance values. \Cref{fig:uc_traj,fig:cc_traj} plot 2500 Monte Carlo trajectories for this problem without and with state chance constraints, respectively. In both cases, the control norm chance constraint is also imposed. The initial covariance, terminal covariance, and the mean trajectory are shown in green, red, and blue, respectively, while Monte Carlo trajectories are shown in grey. The unshaded area indicates the feasible region under chance constraints. \Cref{fig:unorm_hist} shows the control norm history for the Monte Carlo simulations. The Monte Carlo simulations disobeyed the chance constraint in $0.01\%$ of total sample size, which meets the prescribed risk bound of $5\%$. \Cref{fig:cov_final} plots $x_T$ for all samples, overlayed with the terminal covariance constraint ellipsoid $\Sigma_f$, sample covariance and the predicted covariance $\Sigma_T$ from the optimizer. These numerical results demonstrate the performance of the controller to steer the covariance within the final target value in the given time horizon, while respecting the chance constraints. Moreover, the state trajectories in \Cref{statetraj} demonstrate the multi-modal nature of the problem because of the Markovian jump dynamics and mode-dependent controllers.  We also note the number of state distribution modes increasing over time, most apparent in \Cref{fig:uc_traj}, clearly demonstrating the exponential Gaussian mixture distribution of MJLS, which is hard to control using conventional stochastic optimal control approaches.

\begin{figure}[htbp!]
     \centering
     \begin{subfigure}{0.45\textwidth}
         \centering
         \includegraphics[width=\textwidth]{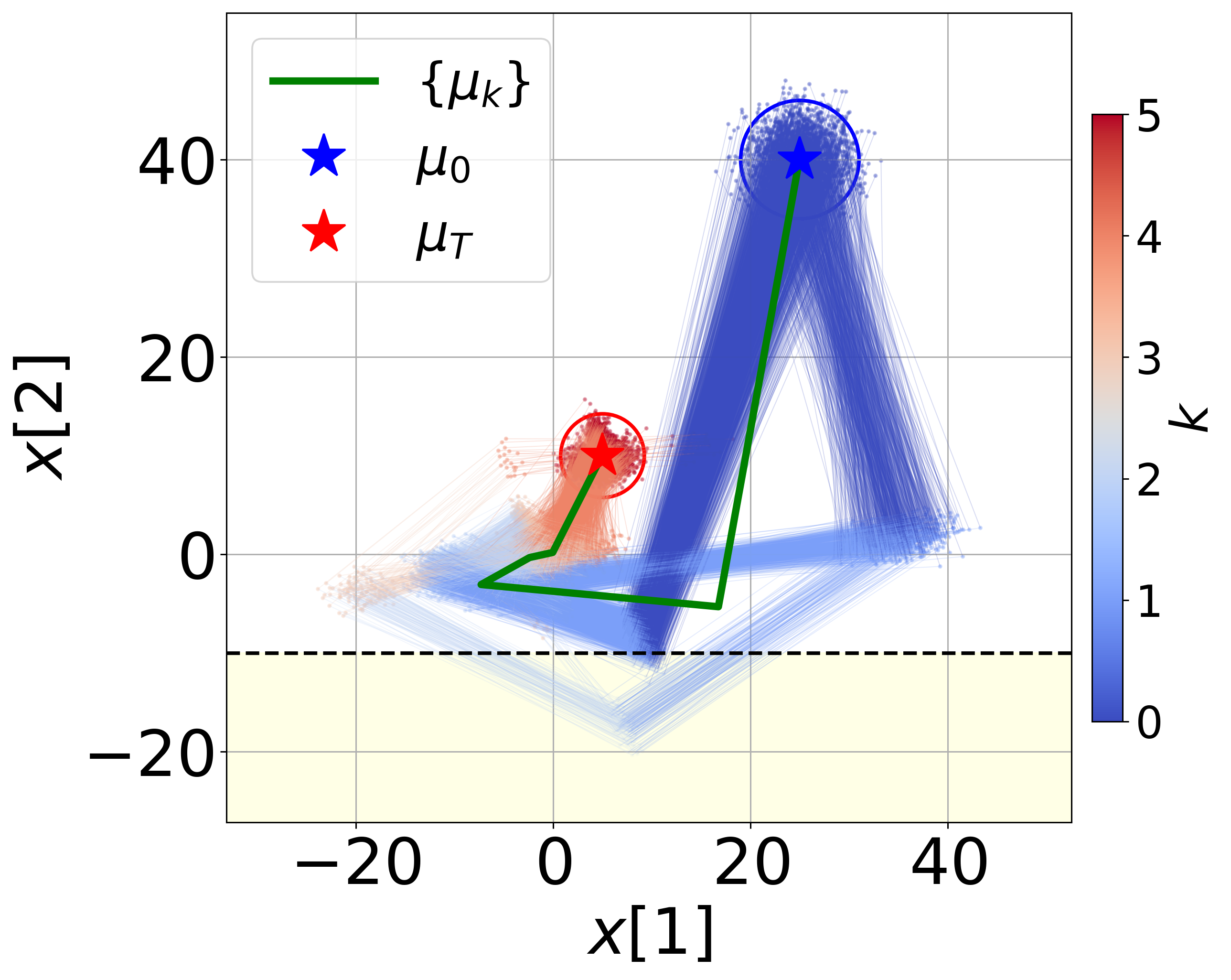}
         \caption{unconstrained case}
         \label{fig:uc_traj}
     \end{subfigure}
     \hfill
     \begin{subfigure}{0.45\textwidth}
         \centering
         \includegraphics[width=\textwidth]{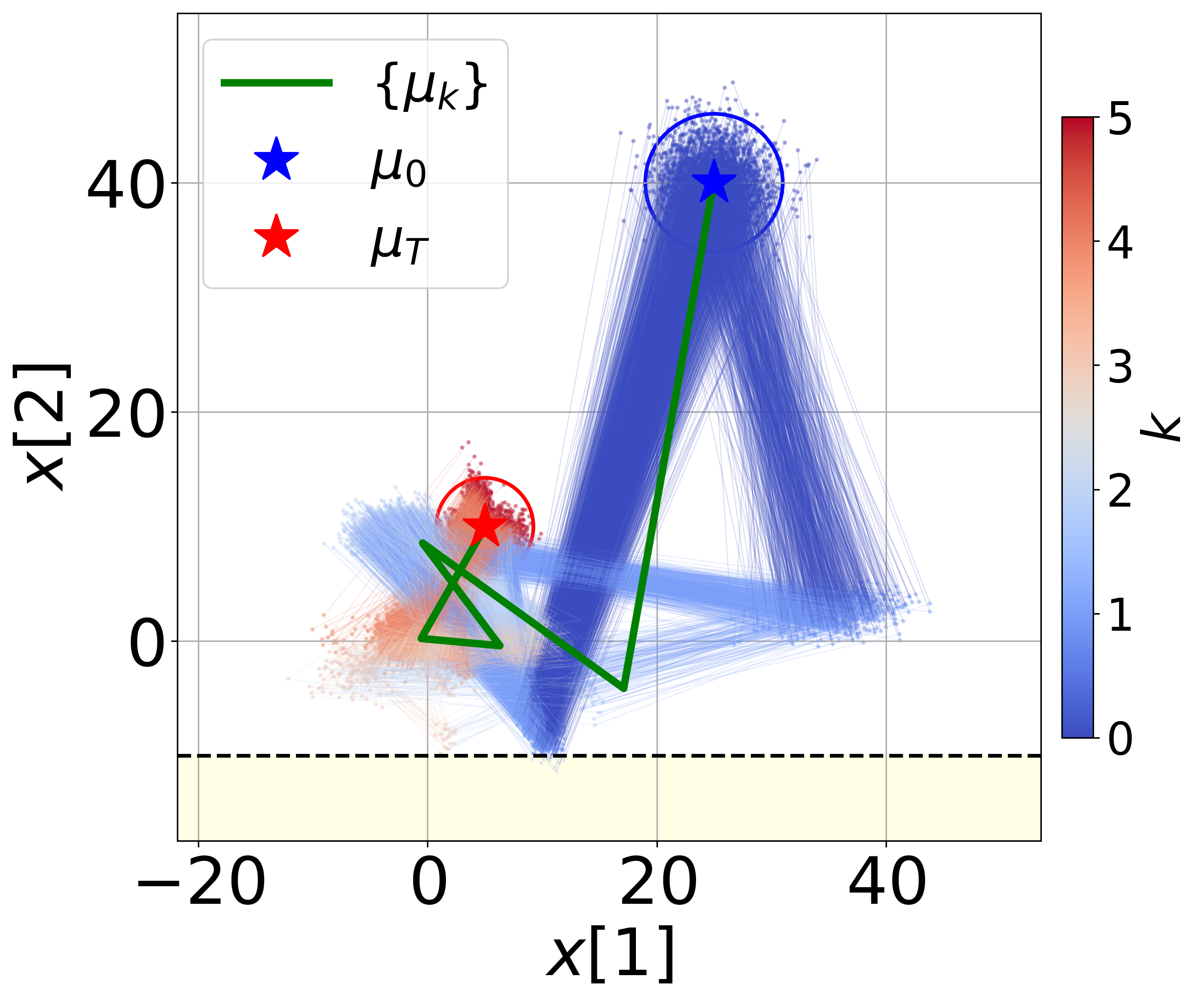}
         \caption{with chance constraints}
         \label{fig:cc_traj}
     \end{subfigure}
     \hfill
     \caption{State trajectories for Monte Carlo simulations with the state chance constraint $\mathbb{P}(x[2]\geq-10)\geq0.95$ (shaded region: infeasible.) Both cases use $P(\|u_k(i)\|\leq8)\leq0.95$.}
     \label{statetraj}
\end{figure}

\begin{figure}[htbp!]
    \centering
    \includegraphics[width= 0.7\linewidth]{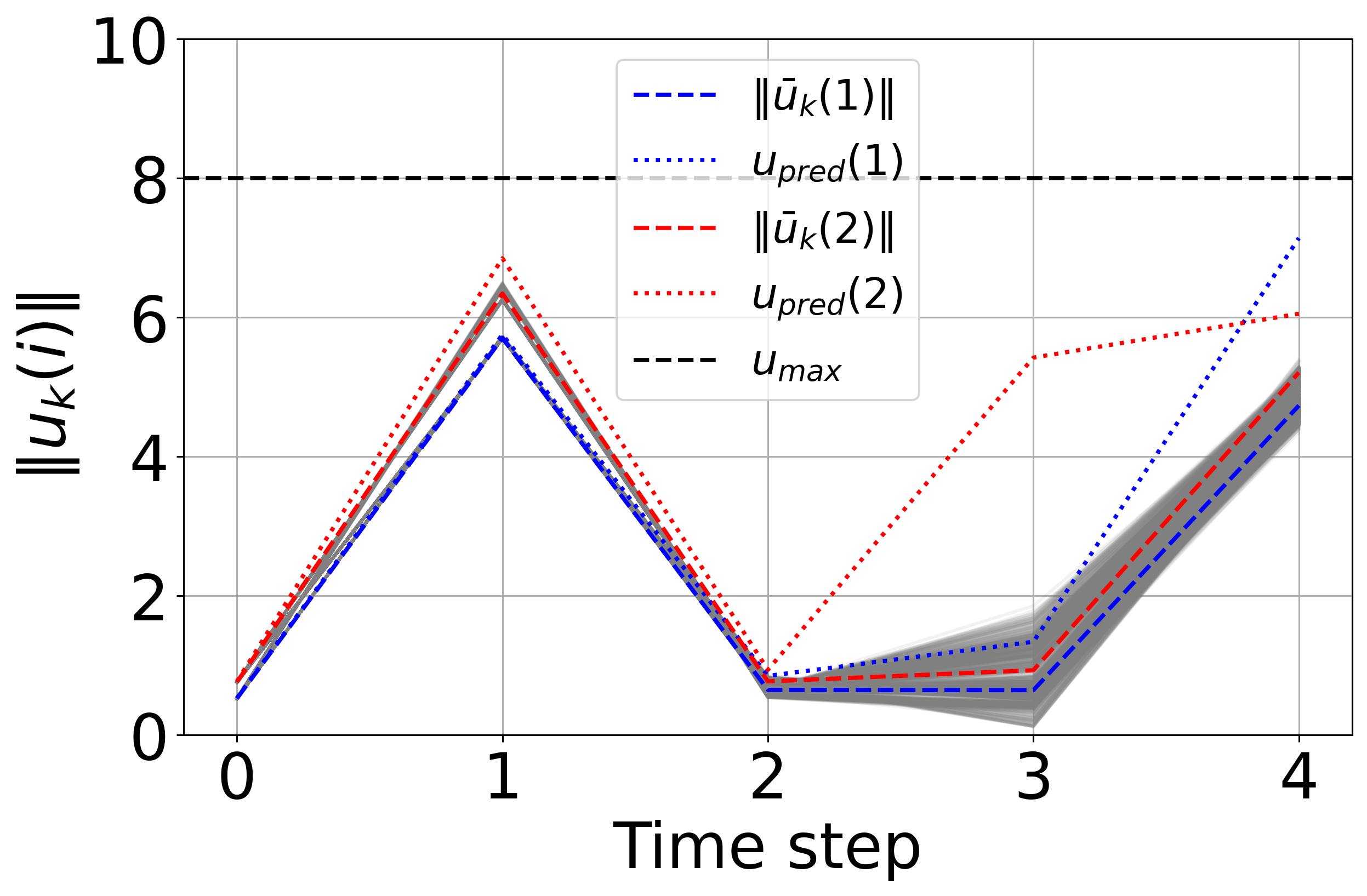}
    \caption{Control Norm History for the Monte Carlo simulations with state chance constraint $P(x_2\geq-10)\geq0.95$. $u_{pred}(i)$ corresponds to $\|\overline{u}_k(i)\|+\sqrt{\tfrac{n_u(i)}{\epsilon_u}\lambda_{\max}(\frac{Y_k(i)}{\rho_k(i)})}$}
    \label{fig:unorm_hist}
\end{figure}

\begin{figure}[htbp!]
    \centering
    \includegraphics[width= 0.7\linewidth]{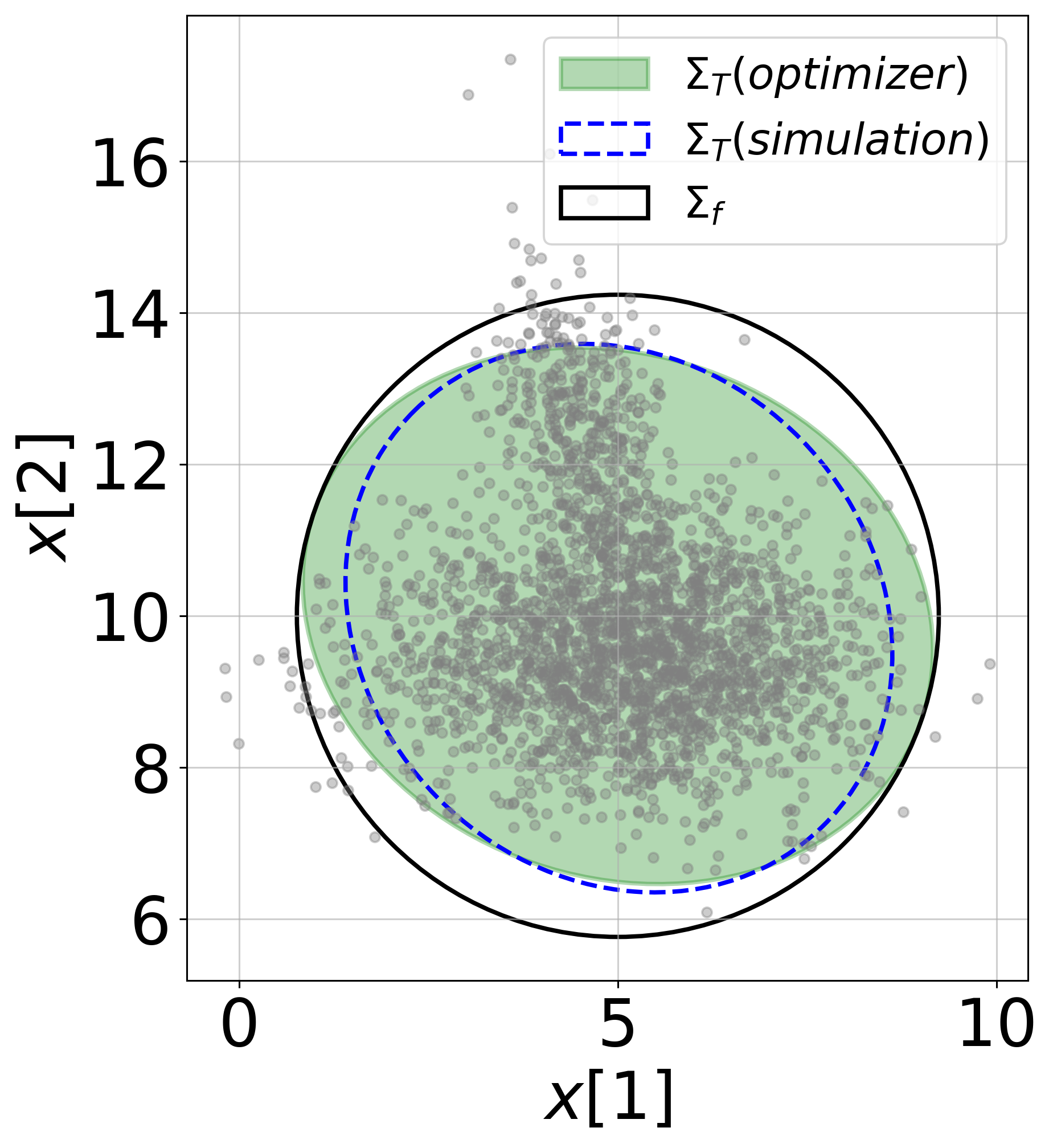}
    \caption{Sample points from Monte Carlo simulations at final time step, with sampled covariance covariance shown in blue, predicted covariance from optimizer in shaded green, and terminal covariance constraint shown in black. The covariance ellipses are scaled to contain $95\%$ of data of a gaussian distribution with same mean and covariance.}
    \label{fig:cov_final}
\end{figure}

For our second example, we consider a more practical double integrator system with three modes: the first mode denotes nominal operations, the second mode denotes nominal operations with wind-based disturbance, and the third mode denotes zero control authority with wind-based disturbance $a_\mathrm{w}$. The dynamics for the system are defined below: 

\begin{equation*}
    \begin{aligned}
        A_k(1) &= A_k(2) = A_k(3) = \begin{bsmallmatrix}
        1 & 0 & \Delta t & 0 \\
        0 & 1 & 0 & \Delta t \\
        0 & 0 & 1 & 0 \\
        0 & 0 & 0 & 1
        \end{bsmallmatrix}, \quad \Delta t = 1
    \end{aligned}
\end{equation*}
\begin{equation*}
    \begin{aligned}
        B_k(1) &= B_k(2) = \begin{bsmallmatrix}
        \frac{\Delta t^2}{2} & 0 \\
        0 & \frac{\Delta t^2}{2} \\
        \Delta t & 0 \\
        0 & \Delta t
        \end{bsmallmatrix}, \quad
        B_k(3) = \mathbf{0}_{4\times2}
    \end{aligned}
\end{equation*}
\begin{equation*}
    \begin{aligned}
        G_k(1) &= \mathbf{0}_{4\times4}, \quad G_k(2) = G_k(3) = 10^{-4}\,I_4
    \end{aligned}
\end{equation*}
\begin{equation*}
    \begin{aligned}
        c_k(1) &= \mathbf{0}_{4\times1}, \quad
        c_k(2) = c_k(3) = B_k(1)\, a_{\mathrm{w}}, \quad
        a_{\mathrm{w}} = \begin{bsmallmatrix} -0.15 \\ 0 \end{bsmallmatrix}
    \end{aligned}
\end{equation*}
\begin{equation*}
    \begin{aligned}
        \mathcal{P} &= \begin{bsmallmatrix}
        0.90 & 0.05 & 0.05 \\
        0.50 & 0.40 & 0.10 \\
        0.50 & 0.25 & 0.25
        \end{bsmallmatrix}, \quad T = 20
    \end{aligned}
\end{equation*}

The initial and final conditions are set as $\rho_0 = [0.5,\, 0.1,\, 0.4]$,
$\mu_0 = [2,\, {-3},\, 0,\, 0]^\top$, $\Sigma_0 = 10^{-4}\,I_4$ and
$\mu_f = [1.5,\, 3,\, 0,\, 0]^\top$, $\Sigma_f = 0.1\,I_4$.
We set a half-space state chance constraint $\mathbb{P}(a^\top x_k + b \leq 0) \geq 0.99$,
where $a = [-1,\, 0,\, 0,\, 0]^\top$ and $b = 0$,
and a control norm bound constraint $\mathbb{P}(\|u_k(i)\| \leq u_{\max}) \geq 0.95$
where $u_{\max} = 5$. We sample $x_0$ from a Gaussian variable with given mean and
covariance, and use a standard normal distribution for $w_k$. We note that both the wind in the second and third mode drives the system towards the unsafe zone.

Using the same solver, hardware set-up, and initial weights as before, \cref{algorithm_1} converges in 5 iterations under 79 seconds. Monte Carlo trajectories for this case are shown in \Cref{fig:db_cc_traj,fig:db_uc_traj} for the unconstrained and state-constrained case. The control chance constraint is imposed in both cases. Monte Carlo trajectories, control norm history, and terminal covariance are shown in \Cref{fig:db_uc_traj,fig:db_cc_traj,fig:db_unorm_hist,fig:db_cov_final}, following the same plotting conventions as the previous example.The violation rate for this example is $0.02\%$, which meets our prescribed risk bound of $5\%$. This example continues to demonstrate the controller's performance despite the Markovian jump dynamics, while also exhibiting the scalability with higher dimensions. We considered a much longer time horizon of $20$ time steps with $3$ modes, but \cref{algorithm_1} converged in less than a minute. Moreover, considering the wide class of practical dynamical systems which can be expressed as a double integrator system, this example also strongly demonstrates the practical applicability of our framework. It is also worth noting that the multi-modal nature is also exhibited in this example, which is most noticable in the unconstrained trajectory in \Cref{fig:db_uc_traj}. The control history plot in \cref{fig:db_unorm_hist} also shows that the predicted control effort norm for modes 1 and 2 differs noticeably, with the wind-induced disturbance mode resulting in a higher feedback component, as we would expect. 

\begin{figure}[htbp!]
     \centering
     \begin{subfigure}{0.45\textwidth}
         \centering
         \includegraphics[width=\textwidth]{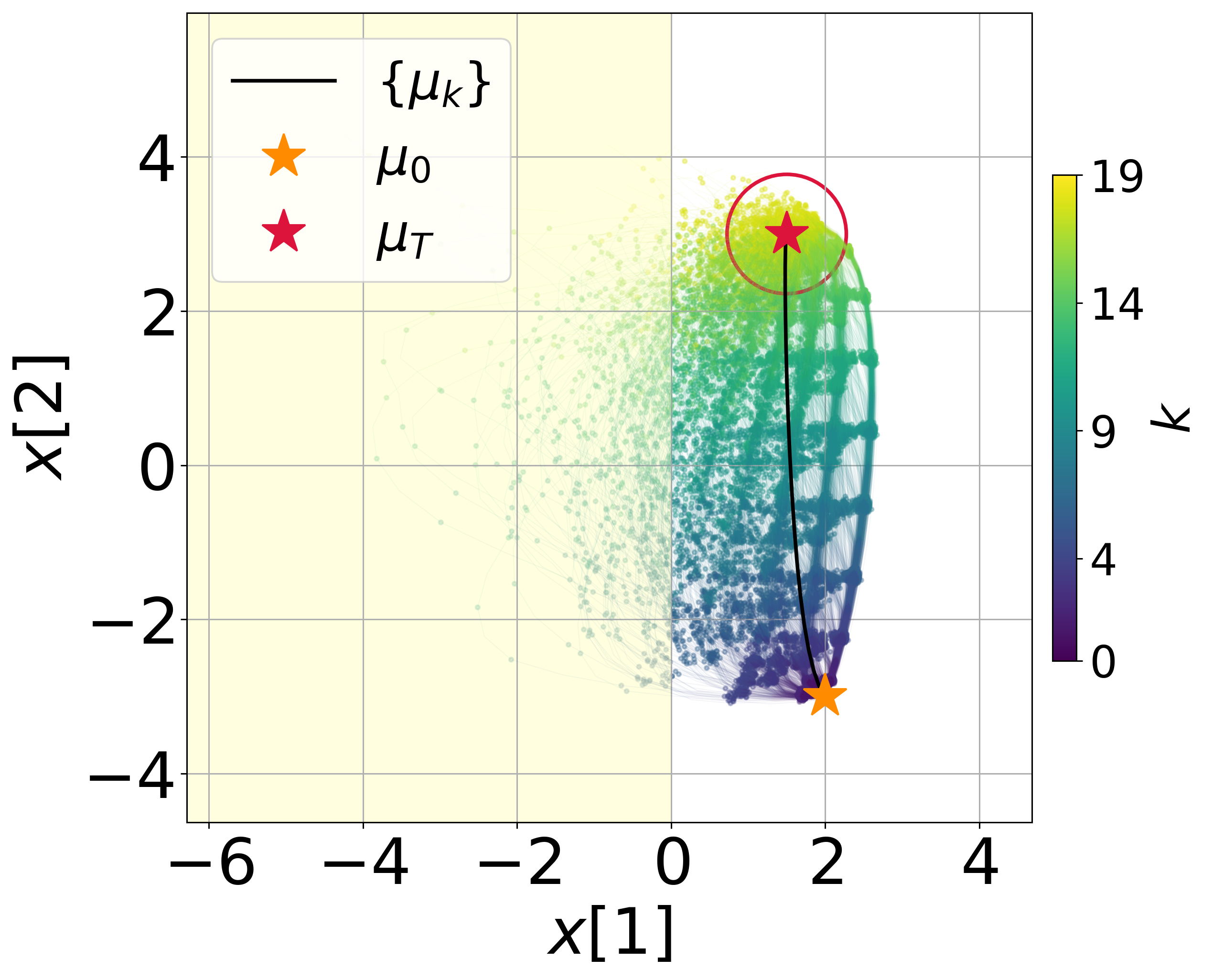}
         \caption{unconstrained case}
         \label{fig:db_uc_traj}
     \end{subfigure}
     \hfill
     \begin{subfigure}{0.45\textwidth}
         \centering
         \includegraphics[width=\textwidth]{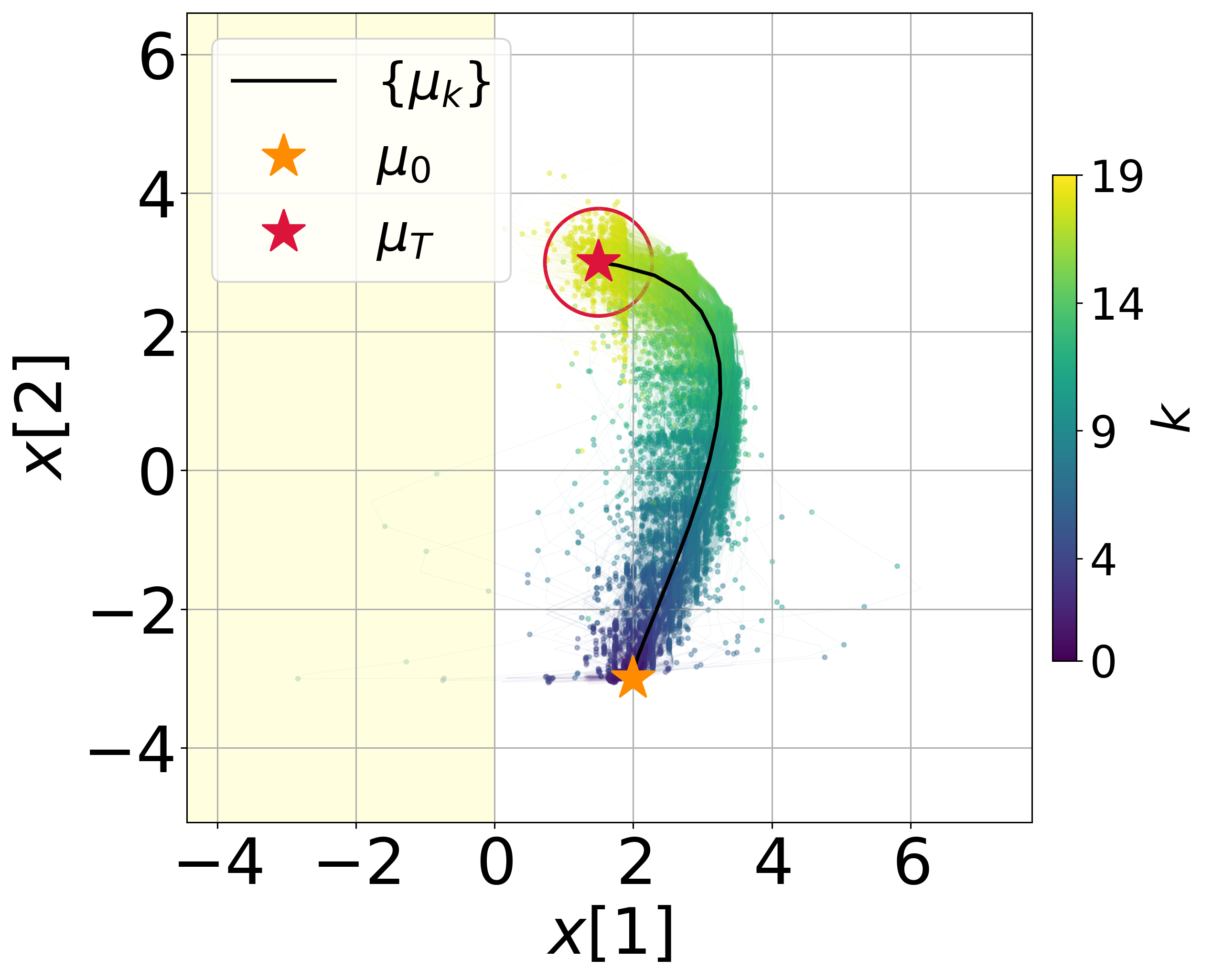}
         \caption{with chance constraints}
         \label{fig:db_cc_traj}
     \end{subfigure}
     \hfill
     \caption{State trajectories for Monte Carlo simulations with the state chance constraint $\mathbb{P}(x[1]\geq0)\geq0.95$ (shaded region: infeasible.) Both cases use $P(\|u_k(i)\|\leq5)\leq0.95$.}
     \label{statetraj}
\end{figure}

\begin{figure}[htbp!]
    \centering
    \includegraphics[width= 0.7\linewidth]{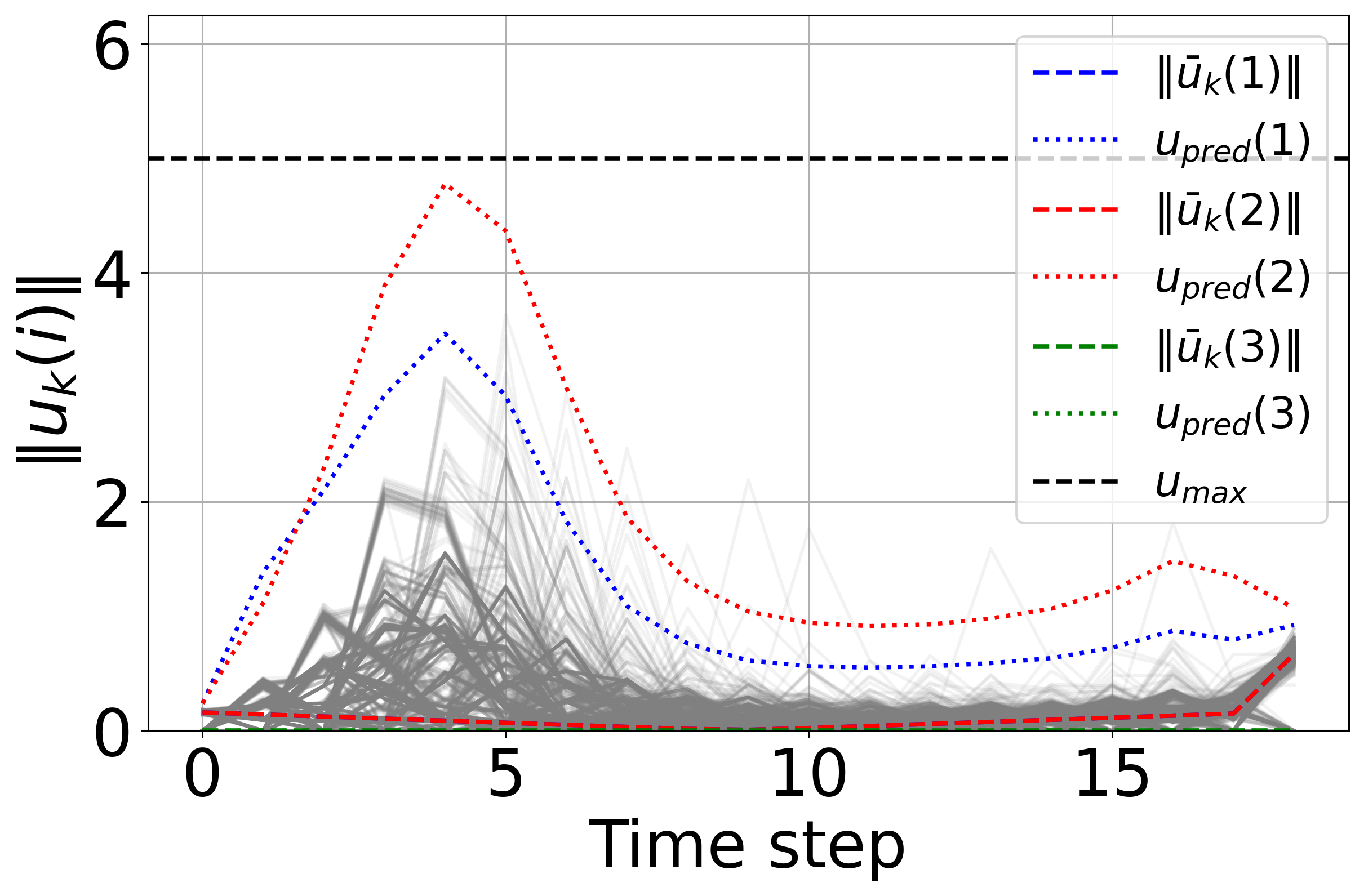}
    \caption{Control Norm History for the Monte Carlo simulations with state chance constraint $P(x[1]\geq0)\geq0.95$. $u_{pred}(i)$ corresponds to $\|\overline{u}_k(i)\|+\sqrt{\tfrac{n_u(i)}{\epsilon_u}\lambda_{\max}(\frac{Y_k(i)}{\rho_k(i)})}$}
    \label{fig:db_unorm_hist}
\end{figure}

\begin{figure}[htbp!]
    \centering
    \includegraphics[width= 0.6\linewidth]{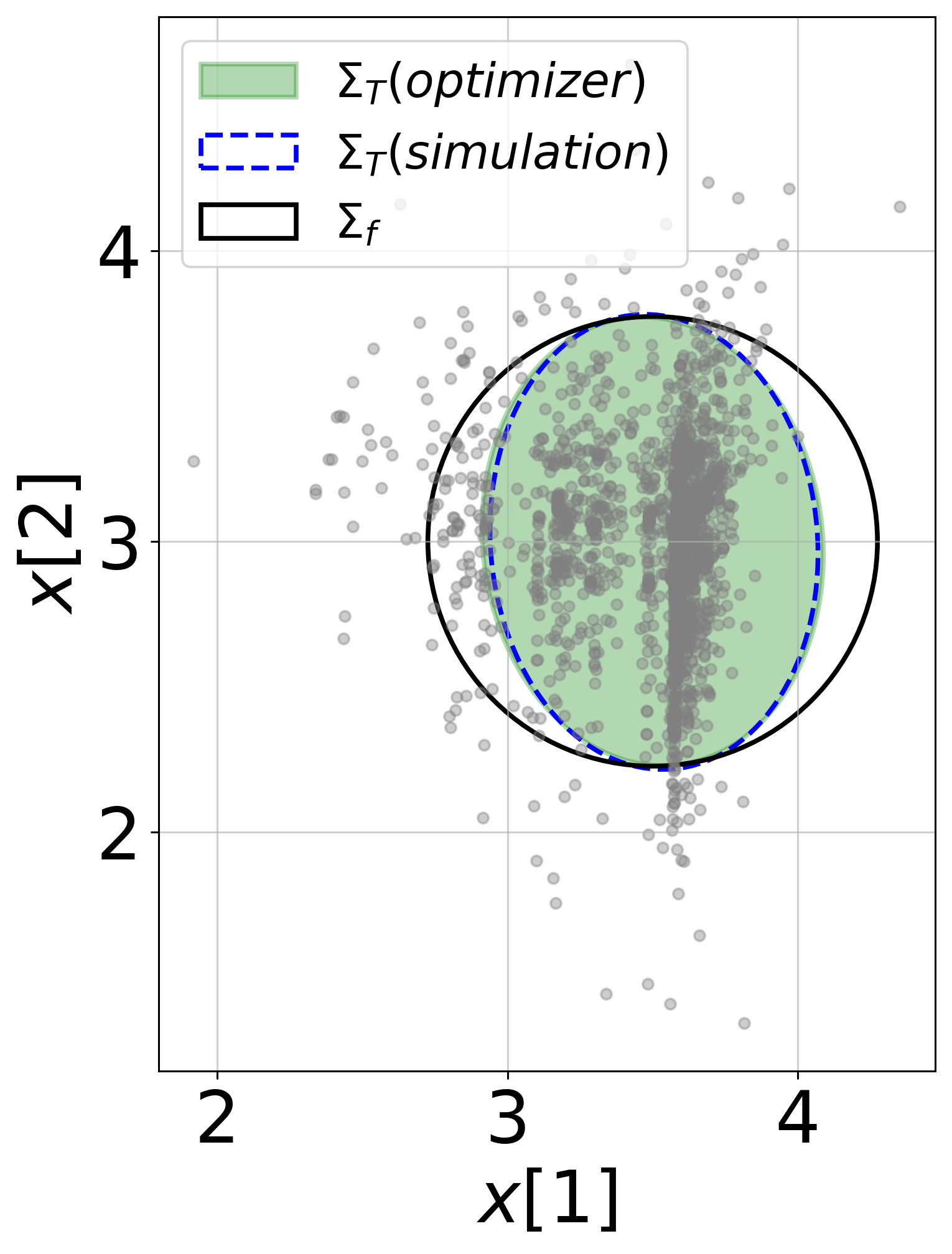}
    \caption{Sample points from Monte Carlo simulations at final time step, with sampled covariance covariance shown in blue, predicted covariance from optimizer in shaded green, and terminal covariance constraint shown in black. The covariance ellipses are scaled to contain $95\%$ of data of a gaussian distribution with same mean and covariance.}
    \label{fig:db_cov_final}
\end{figure}

\section{Conclusion}

In this paper, we consider the optimal covariance steering problem for MJLS under chance constraints. We demonstrate the coupled nature of the problem and decompose the problem into a mean-covariance problem, which can be solved in a two-step convex optimization framework. We propose lossless convex relaxations of the covariance subproblem in both unconstrained and chance-constrained cases and develop an iterative convex optimization framework to solve the chance-constrained covariance steering problem. Numerical simulations for the covariance steering framework are presented, which demonstrate the controller's performance in steering the covariance of a MJLS to within a given terminal covariance bound while obeying chance constraints with additive noise and bias.

\bibliographystyle{ieeetr}
\bibliography{references}

@inproceedings{goldshtein_finite-horizon_2017,
    title = {Finite-horizon covariance control of linear time-varying systems},
    url = {https://ieeexplore.ieee.org/document/8264189/?arnumber=8264189},
    doi = {10.1109/CDC.2017.8264189},
    abstract = {We consider the problem of finite-horizon optimal control of a discrete linear time-varying system subject to a stochastic disturbance and fully observable state. The initial state of the system is drawn from a known Gaussian distribution, and the final state distribution is required to reach a given target Gaussian distribution, while minimizing the expected value of the control effort. We derive the linear optimal control policy by first presenting an efficient solution for the diffusion-less case, and we then solve the case with diffusion by reformulating the system as a superposition of diffusion-less systems. We show that the resulting solution coincides with a LQG problem with particular terminal cost weight matrix.},
    urldate = {2025-01-29},
    booktitle = {2017 {IEEE} 56th {Annual} {Conference} on {Decision} and {Control} ({CDC})},
    author = {Goldshtein, Maxim and Tsiotras, Panagiotis},
    month = dec,
    year = {2017},
    keywords = {Boundary conditions, Cost function, Covariance matrices, Gaussian distribution, Optimal control, Time-varying systems},
    pages = {3606--3611},
}

@book{durrett_probability_2019,
	title = {Probability: {Theory} and {Examples}},
	isbn = {978-1-108-59263-5},
	shorttitle = {Probability},
	abstract = {This lively introduction to measure-theoretic probability theory covers laws of large numbers, central limit theorems, random walks, martingales, Markov chains, ergodic theorems, and Brownian motion. Concentrating on results that are the most useful for applications, this comprehensive treatment is a rigorous graduate text and reference. Operating under the philosophy that the best way to learn probability is to see it in action, the book contains extended examples that apply the theory to concrete applications. This fifth edition contains a new chapter on multidimensional Brownian motion and its relationship to partial differential equations (PDEs), an advanced topic that is finding new applications. Setting the foundation for this expansion, Chapter 7 now features a proof of Itô's formula. Key exercises that previously were simply proofs left to the reader have been directly inserted into the text as lemmas. The new edition re-instates discussion about the central limit theorem for martingales and stationary sequences.},
	language = {en},
	publisher = {Cambridge University Press},
	author = {Durrett, Rick},
	month = apr,
	year = {2019},
	note = {Google-Books-ID: vESPDwAAQBAJ},
	keywords = {Mathematics / Probability \& Statistics / General},
}

@article{calafiore_distributionally_2006,
	title = {On {Distributionally} {Robust} {Chance}-{Constrained} {Linear} {Programs}},
	volume = {130},
	issn = {1573-2878},
	url = {https://doi.org/10.1007/s10957-006-9084-x},
	doi = {10.1007/s10957-006-9084-x},
	abstract = {In this paper, we discuss linear programs in which the data that specify the constraints are subject to random uncertainty. A usual approach in this setting is to enforce the constraints up to a given level of probability. We show that, for a wide class of probability distributions (namely, radial distributions) on the data, the probability constraints can be converted explicitly into convex second-order cone constraints; hence, the probability-constrained linear program can be solved exactly with great efficiency. Next, we analyze the situation where the probability distribution of the data is not completely specified, but is only known to belong to a given class of distributions. In this case, we provide explicit convex conditions that guarantee the satisfaction of the probability constraints for any possible distribution belonging to the given class.},
	language = {en},
	number = {1},
	urldate = {2025-09-24},
	journal = {Journal of Optimization Theory and Applications},
	author = {Calafiore, G. C. and Ghaoui, L. El},
	month = jul,
	year = {2006},
	keywords = {Chance-constrained optimization, convex second-order cone constraints, probability-constrained optimization, robustness, uncertain linear programs},
	pages = {1--22},
}

@article{kumagai_robust_nodate,
	title = {Robust {Cislunar} {Low}-{Thrust} {Trajectory} {Optimization} {Under} {Uncertainties} via {Sequential} {Covariance} {Steering}},
	volume = {0},
	issn = {0731-5090},
	url = {https://doi.org/10.2514/1.G009092},
	doi = {10.2514/1.G009092},
	abstract = {Spacecraft operations are influenced by uncertainties such as dynamics modeling, navigation, and maneuver execution errors. Although mission design has traditionally incorporated heuristic safety margins to mitigate the effect of uncertainties, particularly before/after crucial events, it is yet unclear whether this practice will scale in the cislunar region, which features locally chaotic nonlinear dynamics and involves frequent lunar flybys. This paper applies chance-constrained covariance steering and sequential convex programming to simultaneously design an optimal trajectory and trajectory correction policy that can probabilistically guarantee safety constraints under the assumed physical/navigational error models. The results show that the proposed method can effectively control the state uncertainty in a highly nonlinear environment. The framework allows faster computation and lossless convexification of linear covariance propagation compared to existing methods, enabling a rapid and accurate comparison of ΔV99 costs for different uncertainty parameters. We demonstrate the algorithm on several transfers in the Earth–moon circular restricted three-body problem.},
	number = {0},
	urldate = {2025-09-24},
	journal = {Journal of Guidance, Control, and Dynamics},
	author = {Kumagai, Naoya and Oguri, Kenshiro},
	note = {Publisher: American Institute of Aeronautics and Astronautics
\_eprint: https://doi.org/10.2514/1.G009092},
	pages = {1--19},
}

@misc{chen_new_2011,
	title = {A {New} {Generalization} of {Chebyshev} {Inequality} for {Random} {Vectors}},
	url = {http://arxiv.org/abs/0707.0805},
	doi = {10.48550/arXiv.0707.0805},
	abstract = {In this article, we derive a new generalization of Chebyshev inequality for random vectors. We demonstrate that the new generalization is much less conservative than the classical generalization.},
	urldate = {2025-09-12},
	publisher = {arXiv},
	author = {Chen, Xinjia},
	month = jun,
	year = {2011},
	note = {arXiv:0707.0805 [math]},
	keywords = {Computer Science - Machine Learning, Mathematics - Probability, Mathematics - Statistics Theory, Statistics - Applications, Statistics - Statistics Theory},
}

@article{ono_probabilistic_2013,
	title = {Probabilistic {Planning} for {Continuous} {Dynamic} {Systems} under {Bounded} {Risk}},
	volume = {46},
	issn = {1076-9757},
	url = {https://jair.org/index.php/jair/article/view/10808},
	doi = {10.1613/jair.3893},
	abstract = {This paper presents a model-based planner called the Probabilistic Sulu Planner or the p-Sulu Planner, which controls stochastic systems in a goal directed manner within user-speciﬁed risk bounds. The objective of the p-Sulu Planner is to allow users to command continuous, stochastic systems, such as unmanned aerial and space vehicles, in a manner that is both intuitive and safe. To this end, we ﬁrst develop a new plan representation called a chance-constrained qualitative state plan (CCQSP), through which users can specify the desired evolution of the plant state as well as the acceptable level of risk. An example of a CCQSP statement is “go to A through B within 30 minutes, with less than 0.001\% probability of failure.” We then develop the p-Sulu Planner, which can tractably solve a CCQSP planning problem. In order to enable CCQSP planning, we develop the following two capabilities in this paper: 1) risk-sensitive planning with risk bounds, and 2) goal-directed planning in a continuous domain with temporal constraints. The ﬁrst capability is to ensures that the probability of failure is bounded. The second capability is essential for the planner to solve problems with a continuous state space such as vehicle path planning. We demonstrate the capabilities of the p-Sulu Planner by simulations on two real-world scenarios: the path planning and scheduling of a personal aerial vehicle as well as the space rendezvous of an autonomous cargo spacecraft.},
	language = {en},
	urldate = {2025-02-27},
	journal = {J. of Artif. Intell. Res.},
	author = {Ono, M. and Williams, B. C. and Blackmore, Lars},
	month = mar,
	year = {2013},
	pages = {511--577},
}

@book{costa_discrete-time_2005,
	title = {Discrete-{Time} {Markov} {Jump} {Linear} {Systems}},
	isbn = {978-1-85233-761-2},
	abstract = {Safety critical and high-integrity systems, such as industrial plants and economic systems, can be subject to abrupt changes - for instance, due to component or interconnection failure, sudden environment changes, etc.  Combining probability and operator theory, Discrete-Time Markov Jump Linear Systems provides a unified and rigorous treatment of recent results for the control theory of discrete jump linear systems, which are used in these areas of application.  The book is designed for experts in linear systems with Markov jump parameters, but is also of interest for specialists in stochastic control since it presents stochastic control problems for which an explicit solution is possible - making the book suitable for course use.  Oswaldo Luiz do Valle Costa is Professor in the Department of Telecommunications and Control Engineering at the University of São Paulo, Marcelo Dutra Fragoso is Professor in the Department of Systems and Control at the National Laboratory for Scientific Computing - LNCC/MCT, Rio de Janeiro, and Ricardo Paulino Marques works in the Department of Telecommunications and Control Engineering at the University of São Paulo.},
	language = {en},
	publisher = {Springer Sci. \& Bus. Media},
	author = {Costa, O. L. V. and Fragoso, M. D. and Marques, R. P.},
	month = feb,
	year = {2005},
	keywords = {Algebra, Language Arts \& Disciplines / Library \& Information Science / General, Markov, Markovian Jump Systems, Mathematics / Applied, Mathematics / Functional Analysis, Mathematics / Mathematical Analysis, Mathematics / Probability \& Statistics / General, Mathematics / Probability \& Statistics / Stochastic Processes, Operator theory, Science / System Theory, Stochastic Stability, control theory, filtering problem, operator, optimal control, stability},
}

@article{bakolas_finite-horizon_2018,
	title = {Finite-horizon covariance control for discrete-time stochastic linear systems subject to input constraints},
	volume = {91},
	issn = {0005-1098},
	url = {https://www.sciencedirect.com/science/article/pii/S0005109818300384},
	doi = {10.1016/j.automatica.2018.01.029},
	abstract = {This work deals with a finite-horizon covariance control problem for discrete-time, stochastic linear systems with complete state information subject to input constraints. First, we present the main steps for the transcription of the covariance control problem, which is originally formulated as a stochastic optimal control problem, into a deterministic nonlinear program (NLP) with a convex performance index and with both convex and non-convex constraints. In particular, the convex constraints in this nonlinear program are induced by the input constraints of the stochastic optimal control problem, whereas the non-convex constraints are induced by the requirement that the terminal state covariance be equal to a prescribed positive definite matrix. Subsequently, we associate this nonlinear program, via a simple convex relaxation technique, with a (convex) semi-definite program, which can be solved numerically by means of modern computational tools of convex optimization. Although, in general, the endpoints of a representative sample of closed-loop trajectories generated by the control policy that corresponds to the solution of the relaxed convex program are not expected to follow exactly the goal terminal Gaussian distribution, they are more likely to be concentrated near the mean of this distribution than if they were drawn from the latter, which is a desirable feature in practice. Numerical simulations that illustrate the key ideas of this work are also presented.},
	urldate = {2025-03-13},
	journal = {Automatica},
	author = {Bakolas, Efstathios},
	month = may,
	year = {2018},
	keywords = {Convex optimization, Covariance control, Discrete-time linear systems, Stochastic optimal control},
	pages = {61--68},
}

@inproceedings{rapakoulias_discrete-time_nodate,
	title = {Discrete-{Time} {Optimal} {Covariance} {Steering} via {Semidefinite} {Programming}},
	url = {https://ieeexplore.ieee.org/document/10384118/?arnumber=10384118&tag=1},
	doi = {10.1109/CDC49753.2023.10384118},
	abstract = {This paper addresses the optimal covariance steering problem for stochastic discrete-time linear systems subject to probabilistic state and control constraints. A method is presented to efficiently attain the exact solution of the problem based on a lossless convex relaxation of the original non-linear program using semidefinite programming. Both the constrained and the unconstrained versions of the problem with either equality or inequality terminal covariance boundary conditions are addressed. We first prove that the proposed relaxation is lossless for all of the above cases. Numerical examples are then provided to illustrate the proposed method. Finally, a comparative study is performed on systems of various sizes and steering horizons to illustrate the advantages of the proposed method in terms of computational resources compared to the state of the art.},
	urldate = {2024-11-07},
	booktitle = {2023 62nd {IEEE} {Conf}. on {Decis}. and {Control} ({CDC})},
	author = {Rapakoulias, George and Tsiotras, Panagiotis},
	note = {ISSN: 2576-2370},
	keywords = {Boundary conditions, Convex functions, Linear systems, Probabilistic logic},
	pages = {1802--1807},
}

@inproceedings{mao_successive_nodate,
	title = {Successive convexification of non-convex optimal control problems and its convergence properties},
	url = {https://ieeexplore.ieee.org/document/7798816/?arnumber=7798816},
	doi = {10.1109/CDC.2016.7798816},
	abstract = {This paper presents an algorithm to solve non-convex optimal control problems, where non-convexity can arise from nonlinear dynamics, and non-convex state and control constraints. This paper assumes that the state and control constraints are already convex or convexified, the proposed algorithm convexifies the nonlinear dynamics, via a linearization, in a successive manner. Thus at each succession, a convex optimal control subproblem is solved. Since the dynamics are linearized and other constraints are convex, after a discretization, the subproblem can be expressed as a finite dimensional convex programming subproblem. Since convex optimization problems can be solved very efficiently, especially with custom solvers, this subproblem can be solved in time-critical applications, such as real-time path planning for autonomous vehicles. Several safe-guarding techniques are incorporated into the algorithm, namely virtual control and trust regions, which add another layer of algorithmic robustness. A convergence analysis is presented in continuous-time setting. By doing so, our convergence results will be independent from any numerical schemes used for discretization. Numerical simulations are performed for an illustrative trajectory optimization example.},
	urldate = {2025-03-15},
	booktitle = {2016 {IEEE} 55th {Conf}. on {Decis}. and {Control} ({CDC})},
	author = {Mao, Yuanqi and Szmuk, Michael and Açıkmeşe, Behçet},
	keywords = {Algorithm design and analysis, Approximation algorithms, Convergence, Heuristic algorithms, Nonlinear dynamical systems, Optimal control, Trajectory},
	pages = {3636--3641},
}

@inproceedings{shrivastava_markov-jump_2025,
	title = {Markov-{Jump} {Approach} to {Robust} {TCM} {Policy} {Design} under {Statistical} {Missed}-{Thrust} {Events}: {Application} to {NRHO} {Stationkeeping}},
	shorttitle = {{MARKOV}-{JUMP} {APPROACH} {TO} {ROBUST} {TCM} {POLICY} {DESIGN} {UNDER} {STATISTICAL} {MISSED}-{THRUST} {EVENTS}},
	url = {https://www.researchgate.net/profile/Kenshiro-Oguri/publication/388364692_Markov-Jump_Approach_to_Robust_TCM_Policy_Design_under_Statistical_Missed-thrust_Events_Application_to_NRHO_Stationkeeping/links/67949d4f8311ce680c350097/Markov-Jump-Approach-to-Robust-TCM-Policy-Design-under-Statistical-Missed-thrust-Events-Application-to-NRHO-Stationkeeping.pdf},
	urldate = {2025-03-13},
	booktitle = {{AAS} {Space} {Flight} {Mechanics} {Meeting}},
	author = {Shrivastava, Shaurya and Oguri, Kenshiro},
	year = {2025},
}

@article{diamond_cvxpy_2016,
	title = {{CVXPY}: {A} {Python}-{Embedded} {Modeling} {Language} for {Convex} {Optimization}},
	volume = {17},
	issn = {1533-7928},
	shorttitle = {{CVXPY}},
	url = {http://jmlr.org/papers/v17/15-408.html},
	abstract = {CVXPY is a domain-specific language for convex optimization embedded in Python. It allows the user to express convex optimization problems in a natural syntax that follows the math, rather than in the restrictive standard form required by solvers. CVXPY makes it easy to combine convex optimization with high-level features of Python such as parallelism and object- oriented design. CVXPY is available at www.cvxpy.org under the GPL license, along with documentation and examples.},
	number = {83},
	urldate = {2025-03-13},
	journal = {J. of Mach. Learn. Res.},
	author = {Diamond, Steven and Boyd, Stephen},
	year = {2016},
	pages = {1--5},
}

@inproceedings{oguri_successive_2023,
	title = {Successive {Convexification} with {Feasibility} {Guarantee} via {Augmented} {Lagrangian} for {Non}-{Convex} {Optimal} {Control} {Problems}},
	url = {https://ieeexplore.ieee.org/document/10383462},
	doi = {10.1109/CDC49753.2023.10383462},
	abstract = {This paper proposes an algorithm that solves non-convex optimal control problems with a theoretical guarantee for global convergence to a feasible local solution of the original problem. The proposed algorithm extends the recently proposed successive convexification (SCvx) algorithm to address its key limitation: lack of feasibility guarantee to the original non-convex problem. The main idea of the proposed algorithm is to incorporate the SCvx iteration into an algorithmic framework based on the augmented Lagrangian method to enable the feasibility guarantee while retaining favorable properties of SCvx. Unlike the original SCvx, our approach iterates on both of the optimization variables and the Lagrange multipliers, which facilitates the feasibility guarantee as well as efficient convergence, in a spirit similar to the alternating direction method of multipliers (ADMM). Convergence analysis shows the proposed algorithm's strong global convergence to a feasible local optimum of the original problem and its convergence rate. These theoretical results are demonstrated via numerical examples with comparison against the original SCvx algorithm.},
	urldate = {2025-03-12},
	booktitle = {2023 62nd {IEEE} {Conf}. on {Decis}. and {Control} ({CDC})},
	author = {Oguri, Kenshiro},
	month = dec,
	year = {2023},
	note = {ISSN: 2576-2370},
	keywords = {Convergence, Convex functions, Optimal control, Optimization},
	pages = {3296--3302},
}

@article{flus_control_2022,
	title = {Control of {Jump} {Markov} {Uncertain} {Linear} {Systems} {With} {General} {Probability} {Distributions}},
	volume = {3},
	issn = {2673-6268},
	url = {https://www.frontiersin.org/journals/control-engineering/articles/10.3389/fcteg.2022.806543/full},
	doi = {10.3389/fcteg.2022.806543},
	abstract = {{\textless}p{\textgreater}This paper introduces a method to control a class of jump Markov linear systems with uncertain initialization of the continuous state and affected by disturbances. Both types of uncertainties are modeled as stochastic processes with arbitrarily chosen probability distributions, for which however, the expected values and (co-)variances are known. The paper elaborates on the control task of steering the uncertain system into a target set by use of continuous controls, while chance constraints have to be satisfied for all possible state sequences of the Markov chain. The proposed approach uses a stochastic model predictive control approach on moving finite-time horizons with tailored constraints to achieve the control goal with prescribed confidence. Key steps of the procedure are (i) to over-approximate probabilistic reachable sets by use of the Chebyshev inequality, and (ii) to embed a tightened version of the original constraints into the optimization problem, in order to obtain a control strategy satisfying the specifications. Convergence of the probabilistic reachable sets is attained by suitable bounding of the state covariance matrices for arbitrary Markov chain sequences. The paper presents the main steps of the solution approach, discusses its properties, and illustrates the principle for a numeric example.{\textless}/p{\textgreater}},
	language = {English},
	urldate = {2025-02-27},
	journal = {Frontiers in Control Eng.},
	author = {Flüs, Patrick and Stursberg, Olaf},
	month = feb,
	year = {2022},
	keywords = {Chebyshev inequality, Invariant sets, Jump Markov linear systems, Kronecker algebra, probabilistic reachable sets, stochastic systems},
}

@article{vargas_second_2013,
	title = {Second moment constraints and the control problem of {Markov} jump linear systems},
	volume = {20},
	copyright = {Copyright © 2012 John Wiley \& Sons, Ltd.},
	issn = {1099-1506},
	doi = {10.1002/nla.1849},
	abstract = {SUMMARYThis paper addresses the optimal solution for the regulator control problem of Markov jump linear systems subject to second moment constraints. We can characterize and obtain the solution explicitly using linear matrix inequalities techniques. The constraints are imposed on the second moment of both the system state and control vector, and the optimal solution is obtained in a computable form. To illustrate the usefulness of the approach, specially that for systems subject to abrupt variations and physical limitations, we present an application for one joint of the European Robotic Arm. Copyright © 2012 John Wiley \& Sons, Ltd.},
	language = {en},
	number = {2},
	urldate = {2025-03-14},
	journal = {Numerical Linear Algebra with Applications},
	author = {Vargas, Alessandro N. and Furloni, Walter and do Val, João B.R.},
	year = {2013},
	note = {\_eprint: https://onlinelibrary.wiley.com/doi/pdf/10.1002/nla.1849},
	keywords = {Markov jump systems, constrained control, linear matrix inequalities, linear–quadratic problems, optimal stochastic control},
	pages = {357--368},
}

@article{shrivastava_robust_2024,
	title = {Robust {Controller} {Synthesis} {Under} {Markovian} {Mode} {Switching} {With} {Periodic} {LTV} {Dynamics}},
	issn = {2475-1456},
	url = {https://ieeexplore.ieee.org/abstract/document/10812991},
	doi = {10.1109/LCSYS.2024.3522212},
	abstract = {In this work, we propose novel LMI-based controller synthesis frameworks for discrete-time Markov-jump systems with periodically time-varying dynamics. We discuss necessary and sufficient conditions for mean square stability and derive Lyapunov-like conditions for stability assurance. To relax strict stability requirements, we introduce a new criterion that does not require the Lyapunov function to decrease at each time step. Further, we incorporate these stability theorems in LMI-based controller synthesis frameworks while considering two separate problems: minimizing an upper bound of a quadratic cost and maximizing the region of attraction, all while guaranteeing stability. Numerical simulations verify the controllers’ stability and showcase its applicability to fault-tolerant control.},
	urldate = {2025-01-03},
	journal = {IEEE Control Syst. Lett.},
	author = {Shrivastava, Shaurya and Oguri, Kenshiro},
	year = {2024},
	keywords = {Asymptotic stability, Covariance matrices, Finite element analysis, LMIs, Linear systems, Lyapunov methods, Lyapunov stability, Markov Jump Linear Systems, Numerical stability, Periodic systems, Phase locked loops, Stability criteria, Switches, Symmetric matrices},
	pages = {1--1},
}

@article{liu_optimal_2024,
	title = {Optimal {Covariance} {Steering} for {Discrete}-{Time} {Linear} {Stochastic} {Systems}},
	issn = {1558-2523},
	url = {https://ieeexplore.ieee.org/document/10704028/?arnumber=10704028},
	doi = {10.1109/TAC.2024.3472788},
	abstract = {In this paper, we study the optimal control problem for steering the state covariance of a discrete-time linear stochastic system over a finite time horizon. First, we establish the existence and uniqueness of the optimal control law for a quadratic cost function. Then, we show the separation of the optimal mean and the covariance steering problems. We also develop efficient computational methods to solve for the optimal control law, which is identified as the solution to a semi-definite program. The effectiveness of the proposed approach is demonstrated through numerical examples. In the process, we also obtain some novel theoretical results for a matrix Riccati difference equation, which may be of independent interest.},
	urldate = {2025-01-31},
	journal = {IEEE Trans. on Autom. Control},
	author = {Liu, Fengjiao and Rapakoulias, George and Tsiotras, Panagiotis},
	year = {2024},
	keywords = {Convex functions, Covariance steering, Difference equations, Optimal control, Probabilistic logic, Process control, Space vehicles, State feedback, Stochastic systems, Sufficient conditions, Trajectory optimization, riccati difference equation, semi-definite program},
	pages = {1--16},
}

@article{okamoto_optimal_2018,
	title = {Optimal {Covariance} {Control} for {Stochastic} {Systems} {Under} {Chance} {Constraints}},
	volume = {2},
	issn = {2475-1456},
	url = {https://ieeexplore.ieee.org/document/8336973/?arnumber=8336973},
	doi = {10.1109/LCSYS.2018.2826038},
	abstract = {This letter addresses the optimal covariance control problem for stochastic discrete-time linear systems subject to chance constraints. To the best of our knowledge, covariance steering problems with probabilistic chance constraints have not been discussed previously in the literature, although their treatment seems to be a natural extension. In this letter, we first show that, unlike the case with no chance constraints, the covariance steering problem with chance constraints cannot be decoupled to mean and covariance steering sub-problems. We then propose an approach to solve the covariance steering problem with chance constraints by converting it to a convex programming problem. The proposed algorithm is verified using a numerical example.},
	number = {2},
	urldate = {2024-11-08},
	journal = {IEEE Control Syst. Lett.},
	author = {Okamoto, Kazuhide and Goldshtein, Maxim and Tsiotras, Panagiotis},
	month = apr,
	year = {2018},
	keywords = {Boundary conditions, Gaussian distribution, Linear programming, Linear systems, Optimal control, Optimization, Stochastic systems, stochastic optimal control, uncertain systems},
	pages = {266--271},
}

@inproceedings{ridderhof_chance-constrained_2020,
	title = {Chance-constrained covariance control for low-thrust minimum-fuel trajectory optimization},
	url = {https://www.researchgate.net/profile/Panagiotis-Tsiotras/publication/348481052_AAS_20-618_CHANCE-CONSTRAINED_COVARIANCE_CONTROL_FOR_LOW-THRUST_MINIMUM-FUEL_TRAJECTORY_OPTIMIZATION/links/6000ac20a6fdccdcb852ad37/AAS-20-618-CHANCE-CONSTRAINED-COVARIANCE-CONTROL-FOR-LOW-THRUST-MINIMUM-FUEL-TRAJECTORY-OPTIMIZATION.pdf},
	urldate = {2025-03-13},
	booktitle = {{AAS}/{AIAA} {Astrodynamics} {Specialist} {Conf}.},
	author = {Ridderhof, Jack and Pilipovsky, Joshua and Tsiotras, Panagiotis},
	year = {2020},
	pages = {9--13},
}

@inproceedings{oguri_chance-constrained_2024,
	title = {Chance-{Constrained} {Control} for {Safe} {Spacecraft} {Autonomy}: {Convex} {Programming} {Approach}},
	shorttitle = {Chance-{Constrained} {Control} for {Safe} {Spacecraft} {Autonomy}},
	url = {https://ieeexplore.ieee.org/document/10645008},
	doi = {10.23919/ACC60939.2024.10645008},
	abstract = {This paper presents a robust path-planning framework for safe spacecraft autonomy under uncertainty and develops a computationally tractable formulation based on convex programming. We utilize chance-constrained control to formulate the problem. It provides a mathematical framework to solve for a sequence of control policies that minimizes a probabilistic cost under probabilistic constraints with a user-defined confidence level (e.g., safety with 99.9\% confidence). The framework enables the planner to directly control state distributions under operational uncertainties while ensuring the vehicle safety. This paper rigorously formulates the safe autonomy problem, gathers and extends techniques in literature to accommodate key cost/constraint functions that often arise in spacecraft path planning, and develops a tractable solution method. The presented framework is demonstrated via two representative numerical examples: safe autonomous rendezvous and orbit maintenance in cislunar space, both under uncertainties due to navigation error from Kalman filter, execution error via Gates model, and imperfect force models.},
	urldate = {2025-01-08},
	booktitle = {2024 {Amer}. {Control} {Conf}. ({ACC})},
	author = {Oguri, Kenshiro},
	month = jul,
	year = {2024},
	note = {ISSN: 2378-5861},
	keywords = {Costs, Path planning, Probabilistic logic, Programming, Space vehicles, Uncertainty, Vehicle safety},
	pages = {2318--2324},
}

@article{okamoto_optimal_2019,
	title = {Optimal {Stochastic} {Vehicle} {Path} {Planning} {Using} {Covariance} {Steering}},
	volume = {4},
	issn = {2377-3766},
	url = {https://ieeexplore.ieee.org/document/8651541/?arnumber=8651541},
	doi = {10.1109/LRA.2019.2901546},
	abstract = {This letter addresses the problem of vehicle path planning in the presence of obstacles and uncertainties, a fundamental robotics problem. While several path planning algorithms have been proposed over the years, many of them have dealt with only deterministic environments or with only open-loop uncertainty, i.e., the uncertainty of the system state is not controlled and, typically, increases with time because of exogenous disturbances. This may lead to potentially conservative nominal paths. The typical approach to deal with disturbances and reduce uncertainty is to use a lower level feedback controller. We advocate the premise that, if a path planner can consider the closed-loop evolution of the system uncertainty, it can lead to less conservative, but still feasible, paths. To this end, in this letter, we develop an approach that is based on optimal covariance steering, which explicitly steers the state covariance for stochastic linear systems. We verify the proposed framework using extensive numerical simulations.},
	number = {3},
	urldate = {2025-01-30},
	journal = {IEEE Robot. and Automat. Lett.},
	author = {Okamoto, Kazuhide and Tsiotras, Panagiotis},
	month = jul,
	year = {2019},
	keywords = {Aerodynamics, Gaussian distribution, Motion and path planning, Optimal control, Path planning, Programming, Robots, Uncertainty, formal methods in robotics and automation, probability and statistical methods, robust/adaptive control of robotic systems},
	pages = {2276--2281},
}

@article{chen_optimal_2016,
	title = {Optimal {Steering} of a {Linear} {Stochastic} {System} to a {Final} {Probability} {Distribution}, {Part} {II}},
	volume = {61},
	issn = {1558-2523},
	url = {https://ieeexplore.ieee.org/document/7160709},
	doi = {10.1109/TAC.2015.2457791},
	abstract = {We address the problem of steering the state of a linear stochastic system to a prescribed distribution over a finite horizon with minimum energy, and the problem to maintain the state at a stationary distribution over an infinite horizon with minimum power. For both problems the control and Gaussian noise channels are allowed to be distinct, thereby, placing the results of this paper outside of the scope of previous work both in probability and in control. The special case where the disturbance and control enter through the same channels has been addressed in the first part of this work that was presented as Part I. Herein, we present sufficient conditions for optimality in terms of a system of dynamically coupled Riccati equations in the finite horizon case and in terms of algebraic conditions for the stationary case. We then address the question of feasibility for both problems. For the finite-horizon case, provided the system is controllable, we prove that without any restriction on the directionality of the stochastic disturbance it is always possible to steer the state to any arbitrary Gaussian distribution over any specified finite time-interval. For the stationary infinite horizon case, it is not always possible to maintain the state at an arbitrary Gaussian distribution through constant state-feedback. It is shown that covariances of admissible stationary Gaussian distributions are characterized by a certain Lyapunov-like equation and, in fact, they coincide with the class of stationary state covariances that can be attained by a suitable stationary colored noise as input. We finally address the question of how to compute suitable controls numerically. We present an alternative to solving the system of coupled Riccati equations, by expressing the optimal controls in the form of solutions to (convex) semi-definite programs for both cases. We conclude with an example to steer the state covariance of the distribution of inertial particles to an admissible stationary Gaussian distribution over a finite interval, to be maintained at that stationary distribution thereafter by constant-gain state-feedback control.},
	number = {5},
	urldate = {2025-03-13},
	journal = {IEEE Trans. on Autom. Control},
	author = {Chen, Yongxin and Georgiou, Tryphon T. and Pavon, Michele},
	month = may,
	year = {2016},
	keywords = {Boundary conditions, Bridges, Covariance control, Gaussian distribution, Linear stochastic systems, Noise, Optimal control, Riccati equations, Schrodinger bridges, Schrödinger bridges, Symmetric matrices, covariance control, linear stochastic systems, stationary distributions, stochastic optimal control},
	pages = {1170--1180},
}

@article{chen_optimal_2016-1,
	title = {Optimal {Steering} of a {Linear} {Stochastic} {System} to a {Final} {Probability} {Distribution}, {Part} {I}},
	volume = {61},
	issn = {1558-2523},
	url = {https://ieeexplore.ieee.org/abstract/document/7160692},
	doi = {10.1109/TAC.2015.2457784},
	abstract = {We consider the problem of steering a linear dynamical system with complete state observation from an initial Gaussian distribution in state-space to a final one with minimum energy control. The system is stochastically driven through the control channels; an example for such a system is that of an inertial particle experiencing random “white noise” forcing. We show that a target probability distribution can always be achieved in finite time. The optimal control is given in state-feedback form and is computed explicitly by solving a pair of differential Lyapunov equations that are nonlinearly coupled through their boundary values. This result, given its attractive algorithmic nature, appears to have several potential applications such as to quality control, control of industrial processes, as well as to active control of nanomechanical systems and molecular cooling. The problem to steer a diffusion process between end-point marginals has a long history (Schrödinger bridges) and the present case of steering a linear stochastic system constitutes such a Schrödinger bridge for possibly degenerate diffusions. Our results provide the first implementable form of the optimal control for a general Gauss-Markov process. Illustrative examples are provided for steering inertial particles and for “cooling” a stochastic oscillator. A final result establishes directly the property of Schrödinger bridges as the most likely random evolution between given marginals to the present context of linear stochastic systems. A second part to this work, that is to appear as part II, addresses the general situation where the stochastic excitation enters through channels that may differ from those used to control.},
	number = {5},
	urldate = {2025-03-12},
	journal = {IEEE Trans. on Autom. Control},
	author = {Chen, Yongxin and Georgiou, Tryphon T. and Pavon, Michele},
	month = may,
	year = {2016},
	keywords = {Boundary conditions, Bridges, Cooling, Linear stochastic system, Noise, Optimal control, Process control, Riccati equations, Schrodinger bridge, Schrödinger bridge, stochastic control},
	pages = {1158--1169},
}

@article{hotz_covariance_1987,
	title = {Covariance control theory},
	volume = {46},
	issn = {0020-7179},
	url = {https://doi.org/10.1080/00207178708933880},
	doi = {10.1080/00207178708933880},
	abstract = {There are many theories about the use of covariance matrices in both identification and state estimation. However, there exists no theory about the control of covariances. The need for a theory of covariance control may be argued from two points: (1) many engineering systems have performance requirements naturally stated in terms of the variances of the system states and (2) the various theories of identification, estimation, and model reduction use covariances as a measure of performance. Hence a theory of covariance control may help unify the modelling and control problem. This paper introduces a theory for designing linear feedback controllers so that the closed-loop system achieves a specified state covariance.},
	number = {1},
	urldate = {2025-03-12},
	journal = {Int. J. of Control},
	author = {Hotz, Anthony and Skelton, Robert E.},
	month = jul,
	year = {1987},
	pages = {13--32},
}

@article{aps_mosek_nodate,
	title = {{MOSEK} {Optimizer} {API} for {Python}},
	language = {en},
	author = {ApS, MOSEK},
}

@misc{yu_optimal_2024,
	title = {Optimal {Covariance} {Steering} of {Linear} {Stochastic} {Systems} with {Hybrid} {Transitions}},
	url = {http://arxiv.org/abs/2410.13222},
	doi = {10.48550/arXiv.2410.13222},
	abstract = {This work addresses the problem of optimally steering the state covariance of a linear stochastic system from an initial to a target, subject to hybrid transitions. The nonlinear and discontinuous jump dynamics complicate the control design for hybrid systems. Under uncertainties, stochastic jump timing and state variations further intensify this challenge. This work aims to regulate the hybrid system's state trajectory to stay close to a nominal deterministic one, despite uncertainties and noises. We address this problem by directly controlling state covariances around a mean trajectory, and this problem is termed the Hybrid Covariance Steering (H-CS) problem. The jump dynamics are approximated to the first order by leveraging the Saltation Matrix. When the jump dynamics are nonsingular, we derive an analytical closed-form solution to the H-CS problem. For general jump dynamics with possible singularity and changes in the state dimensions, we reformulate the problem into a convex optimization over path distributions by leveraging Schrodinger's Bridge duality to the smooth covariance control problem. The covariance propagation at hybrid events is enforced as equality constraints to handle singularity issues. The proposed convex framework scales linearly with the number of jump events, ensuring efficient, optimal solutions. This work thus provides a computationally efficient solution to the general H-CS problem. Numerical experiments are conducted to validate the proposed method.},
	urldate = {2025-02-17},
	publisher = {arXiv},
	author = {Yu, Hongzhe and Franco, Diana Frias and Johnson, Aaron M. and Chen, Yongxin},
	month = oct,
	year = {2024},
	note = {arXiv:2410.13222 [math]},
	keywords = {Computer Science - Systems and Control, Electrical Engineering and Systems Science - Systems and Control, Mathematics - Optimization and Control},
}

@article{do_val_receding_1999,
	title = {Receding horizon control of jump linear systems and a macroeconomic policy problem},
	volume = {23},
	issn = {0165-1889},
	url = {https://www.sciencedirect.com/science/article/pii/S016518899800058X},
	doi = {10.1016/S0165-1889(98)00058-X},
	abstract = {We study a time-variant macroeconomic model in which some of the parameters are allowed to fluctuate in an exogenous form, according to a Markov chain. This feature allows us to model abrupt changes for improvement and degradation in terms of the intrinsic relations of the economic variables, and account for changes in the policy-maker's preferences. Receding horizon control is well suited to systems with a modelled parameter fluctuation in the short and medium terms, but with unmodelled uncertainties in the long run. The problem features a partial information structure, since the changes in the economy may not be accessible, and to seek a computable solution we restrict attention to the class of linear feedback controls.},
	number = {8},
	urldate = {2024-08-24},
	journal = {J. of Econ. Dyn. and Control},
	author = {do Val, João B. R. and Başar, Tamer},
	month = aug,
	year = {1999},
	keywords = {Macroeconomic models, Markov jump linear systems, Partially observed systems, Receding horizon control, Rolling plan formulations},
	pages = {1099--1131},
}

@article{li_robust_2019,
	title = {Robust {Simultaneous} {Fault} {Estimation} and {Nonfragile} {Output} {Feedback} {Fault}-{Tolerant} {Control} for {Markovian} {Jump} {Systems}},
	volume = {49},
	issn = {2168-2232},
	url = {https://ieeexplore.ieee.org/abstract/document/8356244?casa_token=S8CZexHvgKcAAAAA:Vjx2pXFurNyMO-SCo9HRlZixAddk3KODd6S-XvKd9SXGy_L5vTn7sQjJHshp8XkY-_gjD1aC4A},
	doi = {10.1109/TSMC.2018.2828123},
	abstract = {This paper is devoted to solve the problems on simultaneous actuator and sensor fault estimations as well as the nonfragile fault-tolerant control (FTC) for a kind of Markovian jump systems with faults and disturbances. First, the considered system is converted to an augmented system by putting the sensor fault into the new state. Then, an adaptive observer is designed for the descriptor system with the actuator fault adjusted by the designed adaptive law. Based on the estimated actuator faults, an output-feedback-based FTC strategy is proposed to stabilize the closed-loop system against actuator and sensor faults, and disturbances, while showing robustness for the control gain perturbations. Sufficient conditions for the existences of the observer and controller are provided in forms of linear matrix inequalities. Finally, a practical application is given to express the validation and effectiveness of the proposed method.},
	number = {9},
	urldate = {2024-08-24},
	journal = {IEEE Trans. on Syst., Man, and Cybern.: Syst.},
	author = {Li, Xiaohang and Ahn, Choon Ki and Lu, Dunke and Guo, Shenghui},
	month = sep,
	year = {2019},
	keywords = {Actuators, Fault estimation (FE), Fault tolerance, Fault tolerant systems, Iron, Linear matrix inequalities, Markovian jump system (MJS), Observers, Symmetric matrices, fault-tolerant control (FTC), observer design},
	pages = {1769--1776},
}

@article{su_event-triggered_2021,
	title = {Event-triggered sliding mode control of networked control systems with {Markovian} jump parameters},
	volume = {125},
	issn = {0005-1098},
	url = {https://www.sciencedirect.com/science/article/pii/S0005109820306075},
	doi = {10.1016/j.automatica.2020.109405},
	abstract = {This paper deals with sliding mode control for networked Markovian jump systems with partially-known transition probabilities via an event-triggered scheme to reduce network bandwidth usage and save network resources. First, based on the analysis of event-triggered scheme and sliding mode control, a corresponding time-delay system model is constructed by investigating the effect of the network transmission delay. Then, the less-conservative mean-square asymptotic stability of the overall closed-loop system is established. Sufficient conditions are obtained to co-design both the switching function and trigger parameters. Simultaneously, a novel control scheme is used to handle the Markovian jump parameters. Cases of completely known or unknown probabilities transition probabilities are also presented. Furthermore, the reachability of the sliding surface under the event-triggered sliding mode controller is analyzed using the Lyapunov stability theory. Finally, simulation results are provided to verify the effectiveness of the proposed design schemes.},
	urldate = {2024-08-24},
	journal = {Automatica},
	author = {Su, Xiaojie and Wang, Chunlian and Chang, Hongbin and Yang, Yue and Assawinchaichote, Wudhichai},
	month = mar,
	year = {2021},
	keywords = {Event-triggered technique, Markovian jump systems, Sliding mode control},
	pages = {109405},
}

@article{costa_constrained_1999,
	title = {Constrained quadratic state feedback control of discrete-time {Markovian} jump linear systems},
	volume = {35},
	issn = {0005-1098},
	url = {https://www.sciencedirect.com/science/article/pii/S0005109898002027},
	doi = {10.1016/S0005-1098(98)00202-7},
	abstract = {In this paper we consider the quadratic optimal control problem of a discrete-time Markovian jump linear system, subject to constraints on the state and control variables. It is desired to find a state feedback controller, which may also depend on the jump variable, that minimizes a quadratic cost and satisfies some upper bounds on the norms of some random variables, related to the state and control variables of the system. The transition probability of the Markov chain and initial condition of the system may belong to appropriate convex sets. We obtain an approximation for the optimal solution of this problem in terms of linear matrices inequalities, so that convex programming can be used for numerical calculations. Examples are presented to illustrate the usefulness of the developed results.},
	language = {en},
	number = {4},
	urldate = {2023-01-24},
	journal = {Automatica},
	author = {Costa, O. L. V. and Assumpção Filho, E. O. and Boukas, E. K. and Marques, R. P.},
	month = apr,
	year = {1999},
	keywords = {Constraints, Convex programming, Jump processes, Markov parameters, Quadratic control},
	pages = {617--626},
}
\appendices
\section{MJLS identities}

We show some of the identities which are used in deriving the covariance propagation equation for MJLS.
\begin{align}
    \mathbb{E}[u_k(i)|r_k=i]=\frac{\mathbb{E}[u_k(i)\mathds{1}_k=i]}{\mathbb{E}[\mathds{1}_{r_k=i}]}=\frac{\rho_k(i)\overline{u}_k(i)}{\rho_k(i)}=\overline{u}_k(i)
\end{align}
\begin{align}
    \mathbb{E}[(u_k(i)-\overline{u}_k(i))(u_k(i)-\overline{u}_k(i))^\top | r_k=i]\notag\\
    = \frac{\mathbb{E}[K_k(i)S_k(i)K^\top_k(i)\mathds{1}_{r_k=i}]}{\mathbb{E}[\mathds{1}_{r_k=i}]}=\frac{Y_k(i)}{\rho_k(i)}
\end{align}
\begin{align}
    \mathbb{E}[(x_k-\overline{x}_k(i))\mathds{1}_{r_k=i}]=0
\end{align}
\begin{align}\label{id0}
    &\mathbb{E}[x_kx_k^\top \mathds{1}_{r_k=i}]=\mathbb{E}[(x_k-\overline{x}_k(i)+\overline{x}_k(i))(x_k-\overline{x}_k(i)+\notag\\
    &\hspace{5mm}\overline{x}_k(i))^\top \mathds{1}_{r_k=i}]=S_k(i)+0+0+\rho_k(i)\overline{x}_k(i)\overline{x}^\top_k(i)\notag\\
    &=S_k(i)+\rho_k(i)\overline{x}_k(i)\overline{x}^\top_k(i)
\end{align}
\begin{align}\label{id1}
    &\mathbb{E}[x_ku_k^\top(i)\mathds{1}_{r_k=i}]=\mathbb{E}[x_k(\overline{u}_k(i) + K_k(i)\notag\\&\hspace{5cm}(x_k-\overline{x}_k(i)))^\top\mathds{1}_{r_k=i}]\notag\\  
    &=\mathbb{E}[((x_k-\overline{x}_k(i))+\overline{x}_k(i))(\overline{u}_k(i) +\notag\\&\hspace{3cm} K_k(i)(x_k-\overline{x}_k(i)))^\top\mathds{1}_{r_k=i}]\notag\\
      &=0+S_k(i)K^\top_k(i)+\rho_k(i)\overline{x}_k(i)\overline{u}^\top_k(i)+0\notag\\
      &=\rho_k(i)\overline{x}_k(i)\overline{u}^\top_k(i)+S_k(i)K^\top_k(i)
\end{align}
\begin{align}
    &\mathbb{E}[u_k(i)c_k^\top\mathds{1}_{r_k=i}]=\mathbb{E}[(\overline{u}_k(i) + K_k(i)(x_k-\overline{x}_k(i)))c_k^\top\mathds{1}_{r_k=i}]\notag\\
        &=\overline{u}_k(i)c_k^\top\mathbb{E}[\mathds{1}_{r_k=i}] + 0 = \overline{u}_k(i)c_k^\top\rho_k(i)
\end{align}
\begin{align}\label{id3}
    &\mathbb{E}[u_k(i)u^\top_k(i)\mathds{1}_{r_k=i}]=\mathbb{E}[(\overline{u}_k(i) + K_k(i)(x_k-\overline{x}_k(i)))(\overline{u}_k(i) +\notag\\& \hspace{4cm} K_k(i) (x_k-\overline{x}_k(i)))^\top\mathds{1}_{r_k=i}]\notag\\
        &=\mathbb{E}[\mathds{1}_{r_k=i}]\overline{u}_k(i)\overline{u}^\top_k(i)+\overline{u}_k(i)(\mathbb{E}[x^\top_k-\overline{x}^\top_k(i)\mathds{1}_{r_k=i}]
        K_k^\top(i)\notag\\& \hspace{3cm}+ K_k(i)\mathbb{E}[x_k-\overline{x}_k(i)\mathds{1}_{r_k=i}]\overline{u}_k^\top(i)\notag\\
        &\hspace{10mm}+K_k(i)\mathbb{E}[(x_k-\overline{x}_k(i))(x_k-\overline{x}_k(i))^\top\mathds{1}_{r_k=i}]K_k^\top(i)\notag\\  
        &=\rho_k(i)\overline{u}_k(i)\overline{u}^\top_k(i)+0+0+K_k(i)S_k(i)K^\top_k(i)\notag\\
        &=\rho_k(i)\overline{u}_k(i)\overline{u}^\top_k(i)+K_k(i)S_k(i)K_k^\top(i)
\end{align}

\end{document}